\newtheorem{thm}{Theorem}
\numberwithin{thm}{section}
\newtheorem{prop}[thm]{Proposition}
\newtheorem{lemma}[thm]{Lemma}
\newtheorem{cor}[thm]{Corollary}
\theoremstyle{definition}
\newtheorem{definition}[thm]{Definition}
\newtheorem{notation}[thm]{Notation}
\theoremstyle{remark}
\newtheorem{remark}[thm]{Remark}
\newtheorem{example}[thm]{Example}
\newcommand{\ob}[1]{#1}
\newcommand{\he}{\mathbf{h}}
\newcommand{\she}{{|\he|}}
\newcommand{\ga}{\gamma}
\newcommand{\de}{\delta}
\newcommand{\eps}{\epsilon}
\newcommand{\la}{\lambda}
\newcommand{\om}{\omega}
\newcommand{\QQ}{\mathbb{Q}}
\newcommand{\ZZ}{\mathbb{Z}}
\newcommand{\NN}{\mathbb{N}}
\newcommand{\PP}{\mathbb{P}}
\newcommand{\mA}{\mathcal{A}}
\newcommand{\mB}{\mathcal{B}}
\newcommand{\mC}{\mathcal{C}}
\newcommand{\mD}{\mathcal{D}}
\newcommand{\mE}{\mathcal{E}}
\newcommand{\mH}{\mathcal{H}}
\newcommand{\mP}{\mathcal{P}}
\newcommand{\mT}{\mathcal{T}}
\newcommand{\mS}{\mathcal{S}}
\newcommand{\mm}{\mathbf{m}}
\newcommand{\xx}{\mathbf{x}}
\newcommand{\yy}{\mathbf{y}}
\newcommand{\zz}{\mathbf{z}}
\newcommand{\Sym}{\mathop{Sym}}
\newcommand{\1}{\mathbf{1}}
\newcommand{\Pd}{{\widehat{\chi}_{d}}}
\newcommand{\Pone}{{\widehat{\chi}_{1}}}
\newcommand{\Ptwo}{{\widehat{\chi}_{2}}}
\newcommand{\Pzero}{{\widehat{\chi}_{0}}}
\newcommand{\ov}{\overline}
\newcommand{\ds}{\displaystyle}
\newcommand{\fig}[3]{\begin{figure}[!ht]\begin{center}\includegraphics[#1]{#2}\end{center}\caption{#3}\label{fig:#2}\end{figure}}
\title[Combinatorial reciprocity for the chromatic polynomial]{Combinatorial reciprocity for the chromatic polynomial and the chromatic symmetric function}
\author{Olivier Bernardi and Philippe Nadeau}
\thanks{O.B. is partially supported by NSF grant DMS-1800681.}
\date{\today}
\begin{document}

\begin{abstract}
Let $G$ be a graph, and let $\chi_G$ be its chromatic polynomial.
For any non-negative integers $i,j$, we give an interpretation for the evaluation $\chi_G^{(i)}(-j)$ in terms of acyclic orientations. This
 recovers the classical interpretations due to Stanley and to Greene and Zaslavsky respectively in the cases $i=0$ and $j=0$. We also give symmetric function refinements of our interpretations, and some extensions. The proofs use heap theory in the spirit of a 1999 paper of Gessel.
\end{abstract}

\maketitle

\section{Introduction}
Let $G$ be a (finite, undirected) graph. A \emph{$q$-coloring} of $G$ is an attribution of a \emph{color} in $\{1,2,\ldots,q\}$ to each vertex of $G$. A $q$-coloring is called \emph{proper} if any pair of adjacent vertices get different colors. 
The \emph{chromatic polynomial} of $G$ is the polynomial $\chi_G$ such that for all positive integers $q$, the evaluation $\chi_G(q)$ is the number of proper $q$-colorings.

In this article we provide a combinatorial interpretation for the evaluations of the polynomial $\chi_G(q)$ and of its derivatives $\chi_G^{(i)}(q)$ at negative integers. Let us state this result. Recall that an orientation of $G$ is called \emph{acyclic} if it does not have any directed cycle. 
A \emph{source} of an orientation is a vertex without any ingoing edge. For a set $U$ of vertices of $G$, we denote $G[U]$ the \emph{subgraph of $G$ induced by $U$}, that is, the graph having vertex set $U$ and edge set made of the edges of $G$ with both endpoints in $U$. The following is our main result about $\chi_G$, where we use the notation $[n]:=\{1,\ldots,n\}$ for a positive integer $n$, and the convention $[0]=\emptyset$.

\begin{thm}\label{thm:main}
Let $G$ be a graph with vertex set $[n]$. 
For any non-negative integers $i,j$, $(-1)^{n-i}\chi_G^{(i)}(-j)$ counts the number of tuples $((V_1,\ga_1),\ldots,(V_{i+j},\ga_{i+j}))$ such that 
\begin{compactitem}
\item $V_1,\ldots,V_{i+j}$ are disjoint subsets of vertices, such that $\bigcup_k V_k=[n]$,
\item for all $k\in [i+j]$, $\ga_k$ is an acyclic orientation of $G[V_k]$,
\item for $k\in [i]$, $V_k\neq \emptyset$ and $\ga_k$ has a unique source which is the vertex $\min(V_k)$.
\end{compactitem}
\end{thm}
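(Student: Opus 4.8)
The strategy is to reduce the statement to its two extreme cases $i=0$ and $j=0$ by exploiting the addition formula for the chromatic polynomial. As a polynomial identity in $t$ (with parameter $q$),
\begin{equation}\label{eq:add}
\chi_G(q+t)=\sum_{U\subseteq[n]}\chi_{G[U]}(t)\,\chi_{G[[n]\setminus U]}(q),
\end{equation}
which holds because, for positive integers $q,t$, a proper $(q+t)$-coloring is equivalent to the data of the set $U$ of vertices receiving one of $t$ distinguished colors, a proper $t$-coloring of $G[U]$, and a proper $q$-coloring of $G[[n]\setminus U]$ (every edge between $U$ and its complement is automatically bichromatic). Applying $\partial_t^{\,i}$ to \eqref{eq:add} and setting $t=0$ isolates the two regimes:
\begin{equation}\label{eq:deriv}
\chi_G^{(i)}(q)=\sum_{U\subseteq[n]}\chi_{G[U]}^{(i)}(0)\,\chi_{G[[n]\setminus U]}(q).
\end{equation}

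Next I would set $q=-j$ in \eqref{eq:deriv} and distribute signs. Writing $m=|U|$ and $(-1)^{n-i}=(-1)^{m-i}(-1)^{n-m}$ turns \eqref{eq:deriv} into
\begin{equation*}
(-1)^{n-i}\chi_G^{(i)}(-j)=\sum_{U\subseteq[n]}\Big[(-1)^{m-i}\chi_{G[U]}^{(i)}(0)\Big]\Big[(-1)^{n-m}\chi_{G[[n]\setminus U]}(-j)\Big].
\end{equation*}
The first bracket is the $j=0$ instance of the theorem for $G[U]$, counting ordered partitions of $U$ into $i$ nonempty blocks $V_1,\dots,V_i$, each equipped with an acyclic orientation whose unique source is the block minimum. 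The second bracket is the $i=0$ instance for $G[[n]\setminus U]$, counting tuples $(V_{i+1},\dots,V_{i+j})$ of disjoint, possibly empty subsets covering $[n]\setminus U$, each with an arbitrary acyclic orientation. Reading $U$ as $V_1\cup\dots\cup V_i$, the product summed over $U$ rebuilds precisely the tuples of the theorem, so the two extreme cases yield the general one.

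It then remains to settle those two cases. The case $i=0$ is Stanley's reciprocity: for any graph $H$ on a vertex set $W$, $(-1)^{|W|}\chi_H(-j)$ counts tuples of disjoint, possibly empty subsets covering $W$ together with an acyclic orientation of each induced block. I would prove this by checking that both sides are polynomials in $j$ and, for positive integers $j$, matching such a tuple with a pair $(\alpha,c)$ consisting of an acyclic orientation $\alpha$ of $H$ and an order-compatible $j$-coloring $c$: the color classes of $c$ are the blocks, the within-class arcs recover the per-block orientations, and the cross-class arcs are forced to point from lower to higher color. For the case $j=0$, differentiating \eqref{eq:add} shows that the coefficients $b_i(G)=[q^i]\chi_G(q)$ are of binomial type, so a short induction gives $\chi_G^{(i)}(0)=\sum\prod_{k=1}^{i}\chi_{G[V_k]}'(0)$, the sum ranging over ordered partitions of $[n]$ into $i$ nonempty blocks. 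This reduces the whole case to the single-block statement that $(-1)^{|W|-1}\chi_H'(0)$ equals the number of acyclic orientations of $H$ with unique source $\min W$.

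That single-source identity---the core of the Greene--Zaslavsky theorem---is where I expect the genuine difficulty, since, unlike the $i=0$ case, it does not follow softly from the addition formula. Here I would follow the heap-theoretic route of the paper (in the spirit of Gessel): encode acyclic orientations as heaps of pieces, recognize the condition ``unique source at the minimum vertex'' as a pyramid condition on the heap, and construct a sign-reversing involution cancelling all heaps with a spurious (non-minimal or non-unique) source. Making this collapse of the alternating sum for $\chi_H'(0)$ precise, and checking it is compatible with the block minima appearing after the reduction, is the step I would treat most carefully.
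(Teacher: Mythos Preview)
Your argument is correct and takes a genuinely different route from the paper. The paper works uniformly through heap generating functions: it writes $\chi_G(q)=[x_1\cdots x_n]T(\xx)^q$, differentiates to get $[x_1\cdots x_n]\ln(T(\xx))^i\,T(\xx)^q$, and then invokes the heap identities $H(\xx)=1/T(-\xx)$ and $P(\xx)=-\ln T(-\xx)$ to obtain $(-1)^{n-i}\chi_G^{(i)}(-j)=[x_1\cdots x_n]P(\xx)^iH(\xx)^j$, which it reads off combinatorially. Your approach instead uses only the elementary addition formula \eqref{eq:add} to split the problem into a convolution of the $j=0$ and $i=0$ extreme cases, and then iterates the same formula (via $i$ auxiliary variables) to reduce the $j=0$ case to the single identity $(-1)^{|W|-1}\chi_H'(0)=\#\{\text{acyclic orientations with unique source }\min W\}$. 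This is a clean, modular reduction: it makes transparent that the general theorem is exactly Stanley's reciprocity and Greene--Zaslavsky's single-source count glued together by convolution, and it avoids heap machinery entirely except at that one irreducible kernel. The trade-off is that the paper's route is self-contained and proves everything (including the Greene--Zaslavsky kernel) in one pass, whereas your last paragraph defers that kernel to an unspecified involution; since this is a classical theorem with several known proofs (heap-theoretic or otherwise), citing it is perfectly acceptable, but you should be aware that this is where all the real work hides in your scheme.
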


We will also prove a generalization of Theorem~\ref{thm:main} (see Theorem~\ref{thm:generalized}), and a refinement at the level of the chromatic symmetric function (see Theorem~\ref{thm:main-symmetric}). As we explain in Section~\ref{sec:related}, the cases $i=0$ and $j=0$ of Theorem~\ref{thm:main} are classical results due to Stanley~\cite{stanley_acyclic} and to Greene and Zaslavsky~\cite{greenezaslavsky} respectively. However these special cases are usually presented in terms of colorings (instead of partitions of the vertex set) and global acyclic orientations (instead of suborientations). A version of Theorem~\ref{thm:main} in this spirit is given in Corollary~\ref{cor:equivThm1}. 

\fig{width=.9\linewidth}{example-graph}{Left: A graph $G$ on 4 vertices, having chromatic polynomial $\chi_G(q)=q^4-4q^3+6q^2-3q$. Right: The graph $G^{(3,2,0,1)}$.}

Let us illustrate Theorem \ref{thm:main} for the graph $G$ represented in Figure~\ref{fig:example-graph}. For $i=j=1$, one needs to counts the pairs $((V_1,\ga_1),(V_2,\ga_2))$, where $V_1\uplus V_2=\{1,2,3,4\}$, $\ga_1$ is an acyclic orientations of $G[V_1]$ with unique source $\min(V_1)$, and $\ga_2$ is any acyclic orientation of $G[V_2]$. The number of valid pairs with $V_1$ of size 1 (resp. 2, 3, 4) is 16 (resp. 8, 4, 3). This gives a total of 31 pairs which, as predicted by Theorem \ref{thm:main}, is equal to $-\chi_G'(-1)$.

In many ways, it feels like Theorem~\ref{thm:main} should have been discovered earlier. Our proof is based on the theory of heaps, which takes its root in the work of Cartier and Foata~\cite{cartierfoata}, and has been popularized by Viennot~\cite{viennot:heaps_newyork}. In fact, our proof is in the same spirit as the one used by Gessel in~\cite{gessel:chromatic-heaps}, and subsequently by Lass in~\cite{lass:chromatic-heaps} (see also the recent preprint \cite{Deb:chromatic-heap}). It consists in showing that well-known counting lemmas for heaps imply a relation between proper colorings and acyclic orientations. We recall the basic theory of heaps and their enumeration in Section~\ref{sec:heaps}. Theorem~\ref{thm:main} is proved in Section~\ref{sec:proof}. In Section~\ref{sec:related}, we discuss some reformulations, and extensions of Theorem~\ref{thm:main} and their relations to the results in~\cite{gessel:chromatic-heaps,greenezaslavsky,lass:chromatic-heaps,stanley_acyclic}. In Section~\ref{sec:sym}, we lift Theorem~\ref{thm:main} at the level of the chromatic symmetric function.

\section{Heaps: definition and counting lemmas} \label{sec:heaps}
In this section we recall the basic theory of heaps. We fix a graph $G=([n],E)$ throughout.

\subsection{Heaps of pieces}
\label{sub:heaps}
We first define $G$-heaps. Our (slightly unconventional) definition is in terms of acyclic orientations of a graph related to $G$. Let $\NN=\{0,1,2,\ldots\}$ be the set of non-negative integers. For a tuple $\mm=(m_1,\cdots,m_n)\in\NN^n$, we define a graph $G^\mm:=(V^\mm,E^\mm)$ with vertex set 
$$\ds V^\mm:=\{v_i^k\}_{i\in[n],~ k\in [m_i]},$$
and edge set defined as follows:
\begin{compactitem}
\item for every vertex $i\in[n]$ of $G$ there is an edge of $G^\mm$ between $v_i^k$ and $v_i^{\ell}$ for all $k,\ell\in[m_i]$,
\item for every pair of adjacent vertices $i,j\in[n]$ of $G$ there is an edge of $G^\mm$ between $v_i^k$ and $v_{j}^{\ell}$ for all $k\in[m_i]$ and all $\ell\in[m_j]$.
\end{compactitem}
The notation $G^\mm$ is illustrated in Figure \ref{fig:example-graph} (right).

\begin{definition} \label{def:heap_acyclic} 
A \emph{$G$-heap} of \emph{type} $\mm$ is an acyclic orientation of the graph $G^\mm$ such that for all $i\in [n]$ and for all $1\leq k<\ell\leq m_i$ the edge between $v_i^k$ and $v_i^{\ell}$ is oriented toward $v_i^{\ell}$. The vertices $v_i^k$ of $G^\mm$ are called \emph{pieces of type $i$} of the $G$-heap.
\end{definition}

\begin{remark}
A more traditional definition of heaps is in terms of partially ordered sets. Namely, a \emph{$G$-heap of type $\mm$} is commonly defined as a partial order $\prec$ on the set $V^\mm$ such that 
\begin{compactenum}
\item[(a)] for any vertex $i\in [n]$, $v_i^1\prec v_i^2\prec\ldots \prec v_i^{m_i}$ ,
\item[(b)] for any adjacent vertices $i,j\in [n]$, the set $\{v_i^k\}_{k\in{[m_i]}}\cup \{v_{j}^\ell\}_{\ell\in{[m_j]}}$ is totally ordered by $\prec$,
\item[(c)] and the order relation is the transitive closure of the relations of type~(a) and~(b).
\end{compactenum}
It is clear that this traditional definition is equivalent to Definition~\ref{def:heap_acyclic}: the relation $\prec$ between vertices in $V^\mm$ simply encodes the existence of a directed path between these vertices. In fact, Definition~\ref{def:heap_acyclic} already appears in~\cite[Definition (c), p.545]{viennot:heaps_newyork}. 

Heaps were originally introduced to represent elements in a partially commutative monoid~\cite{cartierfoata}. We refer the interested reader to~\cite{Krattenthaler:Heaps,viennot:heaps_newyork} for more information about heaps.
\end{remark}

Recall that for an oriented graph, a vertex without ingoing edges is called a \emph{source}, and a vertex without outgoing edges is called a \emph{sink}.
A piece of a heap $\he$ is called \emph{minimal} (resp. \emph{maximal}) if it is a source (resp. sink) in the acyclic orientation $\he$ of $G^\mm$.
A heap is called \emph{trivial} if every piece is both minimal and maximal (which occurs when $G^\mm$ consists of isolated vertices). A heap is a \emph{pyramid}\footnote{This is sometimes called \emph{upside-down pyramid}.} if it has a unique minimal piece. 

Next, we define the generating functions of heaps, trivial heaps and pyramids. 
Let $\xx=(x_1,\ldots,x_n)$ be commutative variables.
Let $\mH$, $\mT$, and $\mP$ be the set of heaps, trivial heaps, and pyramids respectively. We define
\begin{equation}\label{eq:GF}
H(\xx)=\sum_{\he\in\mH}\xx^\he,~~~~ T(\xx)=\sum_{\he\in\mT}\xx^\he,~ \textrm{ and }~~~ P(\xx)=\sum_{\he\in\mP} \frac{\xx^\he}{\she},
\end{equation}
where $\she$ is the number of pieces in the heap $\he$, and $\ds \xx^\he:=\prod_{i=1}^n x_i^{\#\textrm{ pieces of type }i\textrm{ in }\he}$.
In other words, these generating functions, which are formal power series in $x_1,\ldots,x_n$, count heaps according to the number of pieces of each type.

\begin{example} For the graph $G$ represented in Figure \ref{fig:example-graph}, the generating functions  $T$, $H$, $P$ have the following expansions:
\begin{eqnarray*}
T(\xx)&=&1+x_1+x_2+x_3+x_4+x_1x_3+x_2x_4.\\
H(\xx)&=&1+x_1+x_2+x_3+x_4\\&& +x_1^2+x_2^2+x_3^2+x_4^2+x_1x_3+x_2x_4+2x_1x_2+2x_2x_3+2x_3x_4+2x_4x_1+\cdots\\
P(\xx)&=&x_1+x_2+x_3+x_4\\&&+\frac{1}{2}\left(x_1^2+x_2^2+x_3^2+x_4^2+2x_1x_2+2x_2x_3+2x_3x_4+2x_4x_1\right)+\cdots
\end{eqnarray*}
\end{example}

\subsection{Enumeration of heaps}
\label{sub:heap_enumeration}

We now state the classical relation between $H(\xx)$, $T(\xx)$, and $P(\xx)$. For a scalar $r$, we use the notation $r\xx:=(r\,x_1,\ldots,r\,x_n)$.
\begin{thm}[\cite{viennot:heaps_newyork}]\label{thm:heap}
The generating functions of heaps, trivial heaps and pyramids are related by
\begin{equation}\label{eq:heap-trivial}
H(\xx)=\frac{1}{T(-\xx)},
\end{equation}
and
\begin{equation}\label{eq:pyramid-trivial}
P(\xx)=-\ln(T(-\xx)).
\end{equation}
\end{thm}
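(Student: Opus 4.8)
Both identities are classical, going back to Cartier--Foata and Viennot, and the plan is to reprove them combinatorially using the monoid structure on $G$-heaps: two heaps $\he_1,\he_2$ may be superposed into a single heap $\he_1\cdot\he_2$ by stacking $\he_2$ on top of $\he_1$, and since the pieces are then partitioned between the two factors one has $\xx^{\he_1\cdot\he_2}=\xx^{\he_1}\xx^{\he_2}$. In the language of Definition~\ref{def:heap_acyclic}, $\he_1\cdot\he_2$ is the acyclic orientation of $G^{\mm_1+\mm_2}$ obtained by keeping the orientations of $\he_1$ and of $\he_2$ and orienting every remaining edge from a piece of $\he_1$ toward a piece of $\he_2$.

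For \eqref{eq:heap-trivial} I would first record that a trivial heap is nothing but an independent set of $G$: any two pieces of the same type, or of two adjacent types, are joined by an edge of $G^\mm$ and are hence comparable, so a heap in which every piece is both minimal and maximal has at most one piece per type and no two of its types adjacent. Expanding the product gives $H(\xx)\,T(-\xx)=\sum_{(\he,\tau)\in\mH\times\mT}(-1)^{|\tau|}\xx^\he\xx^\tau$, and I would regroup these pairs according to the superposition $\rho=\he\cdot\tau$. The structural input is that the maximal pieces of any heap $\rho$ themselves form a trivial heap (two adjacent pieces cannot both be sinks, and among pieces of a fixed type only the topmost is maximal). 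Hence the pairs $(\he,\tau)$ with $\he\cdot\tau=\rho$ are exactly those obtained by choosing a subset $M$ of the maximal pieces of $\rho$, letting $\tau$ be the trivial heap on $M$ and $\he=\rho\setminus M$ the heap obtained by deleting these sinks. Summing $(-1)^{|M|}$ over all such $M$ yields $0$ whenever $\rho$ has a maximal piece, that is, whenever $\rho$ is nonempty, so only the empty heap survives and $H(\xx)\,T(-\xx)=1$.

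For \eqref{eq:pyramid-trivial} it suffices, in view of \eqref{eq:heap-trivial}, to prove $P(\xx)=\ln H(\xx)$. I would introduce a grading variable $t$ marking the total number of pieces, so that $H(t\xx)=\sum_{\he\in\mH}t^{\she}\xx^\he$ and $P(t\xx)=\sum_{\he\in\mP}\frac{t^{\she}}{\she}\xx^\he$; since $\ln H(t\xx)$ and $P(t\xx)$ both vanish at $t=0$, it is enough to match their derivatives in $t$. After clearing denominators this reduces the claim to the identity $H(\xx)\cdot\sum_{\he\in\mP}\xx^\he=\sum_{\he\in\mH}\she\,\xx^\he$, that is, pairs (heap, pyramid) should be equinumerous with pointed heaps (a heap with a distinguished piece), type by type. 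I would prove this by the pointing bijection sending a pointed heap $(\he,p)$ to the pair $(\he\setminus\rho,\rho)$, where $\rho$ is the order filter generated by $p$: this $\rho$ is a pyramid with unique minimal piece $p$, its complement $\he\setminus\rho$ is an order ideal and hence a valid heap, and the inverse map $(\he_1,\rho)\mapsto(\he_1\cdot\rho,\min\rho)$ works because the pieces of $\rho$ all sit above $p$ while no piece of $\he_1$ does.

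The step that needs the most care is checking that these surgeries respect Definition~\ref{def:heap_acyclic}: that deleting a set of sinks, or deleting an order filter, from an acyclic orientation of $G^\mm$ produces an acyclic orientation of $G^{\mm'}$ for the appropriate type $\mm'$, and that superposition is well defined and really inverts the deletion. This is precisely where the dictionary between the acyclic-orientation description and the poset description recalled in the Remark does the work, translating ``maximal piece $=$ sink'' and ``order filter $=$ upward-closed set of pieces'' back and forth; once it is in place, both counting arguments are routine.
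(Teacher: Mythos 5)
Your proof is correct. For \eqref{eq:heap-trivial} it is essentially the paper's argument in different clothing: the paper proves the more general Lemma~\ref{lem:heaps}, namely $H_{S}(\xx)=T_{\ov S}(-\xx)/T(-\xx)$ for an arbitrary $S\subseteq[n]$, via a sign-reversing involution that transfers a canonically chosen maximal piece between the two factors of the superposition, and \eqref{eq:heap-trivial} is the special case $S=[n]$; your cancellation $\sum_{M}(-1)^{|M|}=0$ over subsets $M$ of the maximal pieces of $\rho$ is exactly that involution in binomial form. The genuine divergence is in \eqref{eq:pyramid-trivial}. The paper also differentiates in $t$, but it computes $\sum_{\he\in\mP}t^{\she}\xx^{\he}$ by partitioning pyramids according to the type $k$ of their unique minimal piece, writing $\mP=\biguplus_k(\mH_{\{k\}}\setminus\{\epsilon\})$, and then invokes Lemma~\ref{lem:heaps} for the singletons $S=\{k\}$ to recognize the quantity $-\partial_t T(-t\xx)/T(-t\xx)$ and integrate. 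You instead prove $\partial_t P(t\xx)=\partial_t\ln H(t\xx)$ directly by the pointing bijection $(\he,p)\leftrightarrow(\he_1,\rho)$ through the order filter generated by the distinguished piece; this is self-contained (no auxiliary lemma about heaps with restricted minimal types is needed), it explains combinatorially both the factor $1/\she$ and the appearance of the logarithm, and the inverse map is transparent because the filter of $p$ consists precisely of the pieces sitting above $p$ while the superposition puts nothing of $\he_1$ above them. What the paper's detour buys is the lemma itself: the identity $H_{\{k\}}(\xx)-1=-T_k(-\xx)/T(-\xx)$ is reused in the proofs of Theorems~\ref{thm:generalized} and~\ref{thm:extension-Lass}, so the paper obtains Theorem~\ref{thm:heap} as a byproduct of a tool it needs anyway, whereas your route proves the theorem but would leave Lemma~\ref{lem:heaps} still to be established for those later applications. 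Both arguments are classical and can be found in~\cite{viennot:heaps_newyork}.
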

Equations (\ref{eq:heap-trivial}-\ref{eq:pyramid-trivial}) are identities for formal power series in $x_1,\ldots,x_n$. Observing that $T(\xx)$ has constant term 1 (corresponding to the empty heap), the right-hand side of~\eqref{eq:heap-trivial} should be understood as $\sum_{n=0}^\infty (1-T(-\xx))^n$ and the right-hand side of~\eqref{eq:pyramid-trivial} should be understood as $\sum_{n=1}^\infty (1-T(-\xx))^n/n$.

Theorem~\ref{thm:heap} will be proved using the following classical result. 
\begin{lemma}[\cite{viennot:heaps_newyork}]\label{lem:heaps} 
Let $S\subseteq [n]$. 
Let $\mH_{S}$ be the set of $G$-heaps such that every minimal piece has type in $S$, and let $\mT_{\ov S}$ be the set of trivial $G$-heaps such that every piece has type in $[n]\setminus S$. Then the generating functions 
$$H_{S}(\xx)=\sum_{\he\in \mH_S} \xx^\he,~ \textrm{ and }~ T_{\ov S}(\xx)=\sum_{\he\in \mT_{\ov S}} \xx^\he,$$
are related by 
\begin{equation}\label{eq:lem-heaps}
H_{S}(\xx)=\frac{T_{\ov S}(-\xx)}{T(-\xx)}.
\end{equation}
\end{lemma}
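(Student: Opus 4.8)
The plan is to prove the identity \eqref{eq:lem-heaps} by a sign-reversing involution, which is the standard combinatorial technique underlying heap enumeration. Rewriting \eqref{eq:lem-heaps} as $H_S(\xx)\,T(-\xx)=T_{\ov S}(-\xx)$, I would interpret each side as a signed generating function over a set of pairs. The left-hand side counts pairs $(\he,\tau)$ where $\he\in\mH_S$ is a heap whose minimal pieces all have type in $S$, and $\tau$ is a trivial heap contributing the sign $(-1)^{|\tau|}$; the monomial weight is $\xx^\he\,\xx^\tau$. The right-hand side counts trivial heaps with all pieces of type in $[n]\setminus S$, again with sign $(-1)^{|\tau'|}$. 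Since a trivial heap is just a set of pairwise non-adjacent vertices (an independent set) of $G$, with one piece of each chosen type, both sides are genuine signed sums over concrete combinatorial objects.

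Next I would construct the involution on the pairs $(\he,\tau)$ appearing on the left that cancels all terms except those matching the right-hand side. The idea is to try to move a piece of $\tau$ onto $\he$, or to peel off a minimal piece of $\he$ into $\tau$, depending on which operation is ``available.'' Concretely, I would scan the types in a fixed order and look for the situation where either $\tau$ contains a piece that could be adjoined as a new minimal piece of $\he$, or $\he$ has a minimal piece that can be removed and placed in $\tau$ (noting that a minimal piece of a heap is precisely a source, so removing it still yields a valid heap). Matching such a piece between the two components reverses the parity of $|\tau|$ while preserving the total weight $\xx^\he\xx^\tau$, hence the sign flips and the two terms cancel. The fixed points of this involution should be exactly the pairs where $\he$ is empty, i.e.\ $\he$ is trivial, and $\tau$ has no movable piece — which forces $\tau$ to use only types outside $S$, recovering $T_{\ov S}(-\xx)$.

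The most delicate part will be specifying the involution so that it is well-defined and genuinely an involution: I need a deterministic rule choosing \emph{which} piece to move, and I must verify that applying the rule twice returns the original pair. The natural choice is to pick the movable piece of smallest type (say), and the key checks are (i) that a piece moved from $\tau$ into $\he$ indeed becomes a \emph{minimal} piece (its type must not be adjacent to, nor coincide with, any existing minimal piece of $\he$, which is exactly the condition guaranteeing it is a source after insertion), and (ii) that the resulting minimal piece is the one the rule selects when run in reverse. The constraint that minimal pieces of $\he$ have type in $S$ must be respected throughout, and it is precisely this constraint that prevents cancellation of the surviving terms and pins down the fixed-point set. I would also handle the bookkeeping of ordering among pieces of equal type, using condition~(a) of the traditional heap definition to ensure a newly inserted piece of type $i$ sits below all existing type-$i$ pieces, so that it is the new minimal piece of that type.

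An alternative, and perhaps cleaner, route is to avoid constructing the involution from scratch and instead derive \eqref{eq:lem-heaps} from a direct structural decomposition: every heap in $\mH_S$ factors uniquely as a heap all of whose minimal pieces lie in $S$, and one can peel minimal pieces one layer at a time. I expect the sign-reversing involution to be the most transparent and self-contained argument, so that is the approach I would carry out in full, with the verification that the map is an involution being the step demanding the most care.
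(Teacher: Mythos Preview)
Your overall strategy---rewrite as $H_S(\xx)\,T(-\xx)=T_{\ov S}(-\xx)$ and cancel via a sign-reversing involution on pairs $(\he,\tau)\in\mH_S\times\mT$, with fixed points $(\eps,\tau)$ for $\tau\in\mT_{\ov S}$---is exactly the paper's approach. However, the specific involution you describe has a genuine gap.

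You propose to transfer pieces between $\tau$ and the \emph{minimal} positions of $\he$: add a piece of $\tau$ as a new source of $\he$, or peel off a source of $\he$ into $\tau$. The problem is that removing a minimal piece from $\he\in\mH_S$ can expose new minimal pieces whose types lie outside $S$, so the resulting heap is no longer in $\mH_S$ and your map leaves its domain. For a concrete failure, take $G$ the edge $\{1,2\}$ and $S=\{1\}$: the heap with a single type-$1$ piece below a single type-$2$ piece lies in $\mH_S$, its only minimal piece has type $1$, but deleting that piece leaves a heap whose unique minimal piece has type $2\notin S$. Your checks (i) and (ii) do not cover this, and the sentence ``removing it still yields a valid heap'' is true but misses the $\mH_S$ constraint.

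The paper's fix is to work at the other end of the heap: stack $\tau$ on \emph{top} of $\he$ (pieces of $\tau$ become new sinks), form the combined heap $\he*\tau$, and transfer a canonically chosen \emph{maximal} piece of $\he*\tau$ that is either of type in $S$ or not minimal in $\he*\tau$. Because adding or removing a maximal piece never changes the set of minimal pieces, membership in $\mH_S$ is automatically preserved; and a maximal piece of $\he*\tau$ lying in $\he$ is necessarily non-adjacent to every type in $\tau$ (else it would have an outgoing edge), so $\tau$ stays trivial. The fixed points are then the pairs where $\he*\tau\in\mT_{\ov S}$, forcing $\he=\eps$. Your plan becomes correct once you make this switch from minimal to maximal pieces.
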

Let us give a sketch of the standard proofs of Lemma~\ref{lem:heaps} and Theorem~\ref{thm:heap}. Observe first that the identity~\eqref{eq:lem-heaps} is equivalent to 
\begin{equation}\label{eq:lem-heaps-proof}
\sum_{(\he_1,\he_2)\in \mH_S\times \mT}(-1)^{|\he_2|}\xx^{\he_1}\xx^{\he_2}=\sum_{\he\in \mT_{\ov S}} (-1)^{|\he|}\xx^\he.
\end{equation}
We now explain how to prove~\eqref{eq:lem-heaps-proof} using a \emph{sign-reversing involution} on $\mH_S\times \mT$.
Given $\he_1\in \mH_S$ of type $\mm_1$ and $\he_2\in \mT$ of type $\mm_2$, we define $\he:=\he_1*\he_2$ as the heap of type $\mm=\mm_1+\mm_2$ obtained from $\he_1$ by adding the pieces of $\he_2$ as new sinks. More precisely, $\he$ is the orientation of $G^\mm$ such that the restriction to $G^{\mm_1}$ is $\he_1$ and the vertices in $V^\mm\setminus V^{\mm_1}$ are sinks. 
Now, we fix a heap $\he$, and look at the set $\mS_\he$ of pairs $(\he_1,\he_2)\in \mH_S\times \mT$ such that $\he_1*\he_2=\he$. If $\he\not\in \mT_{\ov S}$, one can define a simple sign reversing involution on $\mS_\he$ in order to prove that the contributions of the pairs $(\he_1,\he_2)\in \mS_\he$ to~\eqref{eq:lem-heaps-proof} cancel out. This involution simply transfers a canonically-chosen piece of $\he$ between $\he_1$ and $\he_2$ (one can transfer any maximal piece of $\he$ which either has type in $S$ or is not minimal, so a canonical choice is to transfer the piece of minimal type among those).  If $\he\in \mT_{\ov S}$, then $\mS_\he=\{(\epsilon,\he)\}$, where $\epsilon$ is the empty heap, hence the contribution of $\mS_\he$ to~\eqref{eq:lem-heaps-proof} is 1. This proves Lemma~\ref{lem:heaps}.

To prove Theorem~\ref{thm:heap}, observe first that~\eqref{eq:heap-trivial} is the special case $S=[n]$ of~\eqref{eq:lem-heaps}. It remains to prove~\eqref{eq:pyramid-trivial}. Let $t$ be an indeterminate. By differentiating the series $P(t\,\xx)$ (formally) with respect to $t$ we get 
$$t\cdot\frac{\partial}{\partial t}P(t\,\xx)=t\cdot\frac{\partial}{\partial t}\sum_{\he\in \mP}\frac{t^\she}{\she}\xx^\he=\sum_{\he\in \mP}t^\she\xx^\he.$$
We now use the partition $\mP=\biguplus_{k\in[N]}(\mH_{\{k\}}\setminus \{\eps\})$, where $\eps$ is the empty heap. This, together with~\eqref{eq:lem-heaps} gives
\begin{equation}\nonumber
t\cdot \frac{\partial}{\partial t}P(t\,\xx)=\sum_{k=1}^n\left(H_{\{k\}}(t\,\xx)-1\right)=\sum_{k=1}^n\frac{T_{\ov{\{k\}}}(-t\,\xx)-T(-t\,\xx)}{T(-t\,\xx)}=\frac{-1}{T(-t \xx)}\sum_{k=1}^n T_{k}(-t\,\xx),
\end{equation}
where $$T_k(\xx)=\sum_{\substack{\he\in \mT\\ \textrm{containing a piece of type }k}}\xx^\he.$$
Finally, we observe that $\ds \sum_{k=1}^n T_{k}(\xx)=\sum_{\he\in\mT}\she \xx^\he$. This gives
$$\frac{\partial}{\partial t}P(t\,\xx)=\frac{1}{T(-t \xx)}\cdot \sum_{\he\in\mT} \she (-t)^{\she-1}\xx^\he=\frac{-1}{T(-t \xx)}\cdot \frac{\partial}{\partial t}T(-t\,\xx),$$
which, upon integrating (formally) with respect to $t$, gives~\eqref{eq:pyramid-trivial}.


\section{Heaps, colorings, and orientations: proof of Theorem~\ref{thm:main}}\label{sec:proof}
This section is dedicated to the proof of Theorem~\ref{thm:main}. We fix a graph $G=([n],E)$ throughout.
\begin{notation}
We denote by $R[[\xx]]$ the ring of power series in $x_1,\ldots,x_n$ with coefficients in a ring $R$. For a tuple $\mm=(m_1,\ldots,m_n)\in\NN^n$, we denote $\xx^\mm=x_1^{m_1}\cdots x_n^{m_n}$. For a power series $F(\xx)\in R[[\xx]]$, we denote by $[\xx^\mm]F(\xx)$ the coefficient of $\xx^\mm$ in $F(\xx)$.
\end{notation}

The first step is to express the chromatic polynomial of $G$ in terms of trivial heaps. 
\begin{lemma}\label{lem:chrom1}
Let $T(\xx)$ be the generating function of trivial $G$-heaps defined in~\eqref{eq:GF}, and let $q$ be an indeterminate. Then,
\begin{equation}\label{eq:chrom}
\chi_G(q)=[x_1\cdots x_n]T(\xx)^q.
\end{equation}

\end{lemma}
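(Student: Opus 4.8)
The plan is to interpret both sides combinatorially and show they count the same objects. The key observation is that a trivial $G$-heap is exactly an independent set of $G$: since a trivial heap has $G^\mm$ consisting of isolated vertices, and an edge exists in $G^\mm$ between pieces of the same type (when $m_i \geq 2$) or between pieces of adjacent types, having no edges forces each $m_i \in \{0,1\}$ with the set of types $S = \{i : m_i = 1\}$ being an independent set in $G$. Thus $T(\xx) = \sum_{S \text{ independent}} \xx^S = \sum_{S \text{ independent}} \prod_{i \in S} x_i$, where the sum ranges over all independent sets of $G$ (including the empty set, giving the constant term $1$).

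Next I would expand $T(\xx)^q$ for a positive integer $q$. Writing $T(\xx)^q = \big(\sum_{S \text{ indep.}} \xx^S\big)^q$, the coefficient $[x_1\cdots x_n]T(\xx)^q$ counts the number of ways to choose an ordered $q$-tuple $(S_1,\ldots,S_q)$ of independent sets such that $\prod_k \xx^{S_k} = x_1\cdots x_n$; that is, every vertex of $[n]$ appears in exactly one of the $S_k$. In other words, we are counting ordered set partitions (allowing empty blocks) $[n] = S_1 \uplus \cdots \uplus S_q$ into $q$ parts, each part being an independent set of $G$.

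The final step is to recognize this count as $\chi_G(q)$. An ordered partition of $[n]$ into $q$ independent sets $(S_1,\ldots,S_q)$ is precisely a proper $q$-coloring: assign color $k$ to every vertex in $S_k$. The condition that each $S_k$ is independent is exactly the condition that no edge has both endpoints the same color, i.e.\ the coloring is proper; the disjointness and covering conditions say each vertex gets exactly one color. This bijection shows $[x_1\cdots x_n]T(\xx)^q$ equals the number of proper $q$-colorings, which is $\chi_G(q)$ by definition. Since this identity holds for all positive integers $q$ and both sides are polynomials in $q$ (the right side because $[x_1\cdots x_n]T(\xx)^q$ is a polynomial in $q$, being a finite sum over the degree-$n$ monomials), they agree as polynomials.

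I expect no serious obstacle here; the main point requiring care is the identification of trivial heaps with independent sets, which follows directly from Definition~\ref{def:heap_acyclic} and the construction of $G^\mm$. One should also confirm that treating $q$ as an indeterminate is justified: $[x_1\cdots x_n]T(\xx)^q$ is a polynomial in $q$ because only independent sets of total size contributing to the squarefree monomial $x_1\cdots x_n$ matter, so this is a well-defined polynomial identity once verified at all positive integers.
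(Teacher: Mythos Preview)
Your proof is correct and follows essentially the same approach as the paper: identify trivial $G$-heaps with independent sets, interpret $[x_1\cdots x_n]T(\xx)^q$ for positive integers $q$ as counting ordered $q$-tuples of disjoint independent sets covering $[n]$ (i.e., proper $q$-colorings), and then extend to indeterminate $q$ by the polynomiality of both sides. The paper's proof is slightly terser but the argument is the same.
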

The right-hand side in~\eqref{eq:chrom} has to be understood as the coefficient of $x_1\cdots x_n$ in the series $\ds \exp(q\ln(T(\xx)):=\sum_{k=0}^\infty \frac{(q\ln(T(\xx)))^k}{k!} \in \QQ[q][[\xx]]$.
\begin{proof}
Recall that a set of vertices $V\subseteq [n]$ is called \emph{independent} if the vertices in $V$ are pairwise non-adjacent. There is an obvious equivalence between independent sets and trivial heaps, hence $T(\xx)$ can be thought as the generating function of independent sets.

Let $q$ be a positive integer. Observe that for any proper $q$-coloring, the set of vertices of color $i\in[q]$ is an independent set. In fact, upon denoting $V_i$ the set of vertices of color~$i$, it is clear that a proper $q$-coloring can equivalently be seen as a $q$-tuple $(V_1,\ldots,V_q)$ of independent sets of vertices, which are disjoint and such that $\bigcup_{k\in [q]} V_k=[n]$. This immediately implies that~\eqref{eq:chrom} holds for the positive integer $q$. Since both sides of~\eqref{eq:chrom} are polynomials in $q$, the identity holds for an indeterminate $q$.
\end{proof}

Upon differentiating~\eqref{eq:chrom} $i$ times one gets 
$$\chi_G^{(i)}(q)=\frac{\partial^i}{\partial q^i}[x_1\cdots x_n]T(\xx)^q=[x_1\cdots x_n]\frac{\partial^i}{\partial q^i}\exp\left(q\ln(T(\xx))\right)=[x_1\cdots x_n]\ln(T(\xx))^i T(\xx)^q.$$ 
In the right-hand side of the above equation, we are are extracting a coefficient of degree $n$, hence this expression is invariant under changing $\xx$ into $-\xx$ and multiplying by $(-1)^n$. Hence
$(-1)^n\chi_G^{(i)}(q)=[x_1\cdots x_n]\ln(T(-\xx))^i\, T(-\xx)^q$, and for a non-negative integer $j$, 
\begin{eqnarray}
(-1)^{n-i}\chi_G^{(i)}(-j)&=&[x_1\cdots x_n](-\ln(T(-\xx)))^i(T(-\xx))^{-j}\nonumber\\
&=& [x_1\cdots x_n]P(\xx)^i H(\xx)^j,\label{eq2}
\end{eqnarray}
where the last equality follows from Theorem~\ref{thm:heap}.

The next step is to relate heaps and pyramids to acyclic orientations. For a set $V\subseteq [n]$, let $\xx^V$ be the monomial $x_1^{\de_1}\cdots x_n^{\de_n}$, where $\de_i=1$ if $i\in V$ and $\de_i=0$ otherwise. 

\begin{lemma}\label{lem:chrom2} Let $V\subseteq [n]$.
The generating function $H(\xx)$ and $P(\xx)$ defined in~\eqref{eq:GF} satisfy
\begin{align}
[\xx^V]H(\xx)&=\# \textrm{ acyclic orientations of }G[V], \label{eq:acyc} \\
[\xx^V]P(\xx)&=\# \textrm{ acyclic orientations of }G[V] \textrm{ with unique source }\min(V),\label{eq:acyc-rooted} 
\end{align}
where the right-hand side of~\eqref{eq:acyc-rooted} is interpreted as 0 if $V=\emptyset$.
\end{lemma}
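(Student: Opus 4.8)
The plan is to evaluate both coefficients at the single most convenient type vector $\mm=\1_V$ (the indicator of $V$, so $m_i=1$ for $i\in V$ and $m_i=0$ otherwise) and then to isolate the one genuinely nontrivial point, which is a symmetry property of the sources of acyclic orientations. The starting observation is that $G^{\1_V}$ is canonically isomorphic to the induced subgraph $G[V]$: each $i\in V$ contributes a single piece $v_i^1$, so there are no intra-column edges (the defining condition $1\le k<\ell\le m_i$ being vacuous), and the remaining edges of $G^{\1_V}$ are exactly the edges of $G[V]$ under $v_i^1\mapsto i$. With this identification, Definition~\ref{def:heap_acyclic} says that a $G$-heap of type $\1_V$ is simply an acyclic orientation of $G[V]$, the orientation condition on columns being empty. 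Since the heaps $\he$ with $\xx^\he=\xx^V$ are precisely those of type $\1_V$, extracting $[\xx^V]H(\xx)$ counts exactly these, which is~\eqref{eq:acyc}.

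For~\eqref{eq:acyc-rooted}, the same identification shows that the pyramids of type $\1_V$ are exactly the acyclic orientations of $G[V]$ with a \emph{unique} source, the source being allowed to be any vertex of $V$. Every such heap has $\she=|V|$ pieces, so the $1/\she$ weighting in~\eqref{eq:GF} gives, for $V\neq\emptyset$, the equality $[\xx^V]P(\xx)=\frac{1}{|V|}\,M$, where $M$ is the number of acyclic orientations of $G[V]$ with a unique source (and both sides vanish when $V=\emptyset$, matching the stated convention, since the empty heap is not a pyramid). Writing $N_v$ for the number of acyclic orientations of $G[V]$ whose unique source is a fixed vertex $v$, we have $M=\sum_{v\in V}N_v$, so~\eqref{eq:acyc-rooted} is equivalent to the identity $\sum_{v\in V}N_v=|V|\cdot N_{\min(V)}$.

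Everything thus reduces to the claim that $N_v$ is independent of $v\in V$, and this is the step I expect to be the main obstacle: it is exactly the source-independence underlying the Greene--Zaslavsky theorem, and it is invisible at the level of the heap generating functions. If $G[V]$ is disconnected then every acyclic orientation has a source in each component, so $M=0$ and all $N_v=0$; hence we may assume $G[V]$ connected, and it suffices to prove $N_u=N_v$ for every edge $\{u,v\}$. Given an acyclic orientation $\al$ with unique source $u$, let $R$ be the set of vertices reachable from $v$ along directed paths of $\al$. No edge leaves $R$, so the edges between $R$ and its complement form a directed cut, and reversing exactly those edges yields another acyclic orientation $\be$. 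One checks that $v$ becomes a source of $\be$ and is its unique source --- here adjacency is used, since the former source $u$ now receives the reversal of the edge $\{u,v\}$ --- so $\al\mapsto\be$ sends orientations with unique source $u$ to orientations with unique source $v$. The delicate point, which makes this map a bijection, is that it is inverted by the analogous construction with the roles of $u$ and $v$ exchanged; this follows because the set of vertices reachable from $u$ in $\be$ is exactly $V\setminus R$, which one sees by walking backwards along in-edges from any vertex of $V\setminus R$ (all of which stay within $V\setminus R$) until reaching the unique source $u$ of the restriction $\al|_{V\setminus R}$. Since reversing the cut associated with $V\setminus R$ is the same operation as reversing the cut associated with $R$, the exchanged construction undoes the first one, so $\al\mapsto\be$ is a bijection and $N_u=N_v$, which proves the lemma.
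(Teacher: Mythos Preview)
Your proof is correct and follows the same overall structure as the paper's: identify $G^{\1_V}$ with $G[V]$ to get~\eqref{eq:acyc} immediately, then for~\eqref{eq:acyc-rooted} reduce to the source-independence statement $N_u=N_v$. The only difference is the bijection used for that last step. The paper works with arbitrary $i,j\in V$ and, given an orientation with unique source $i$, reverses every edge lying on some directed path from $i$ to $j$; this lands directly in $\mB_j$ without any adjacency hypothesis. You instead first reduce to adjacent $u,v$ via connectedness, then reverse the directed cut determined by the set $R$ of vertices reachable from $v$. Both constructions are standard; yours has the advantage that the involution property is transparently verified (the reachable set from $u$ in the new orientation is exactly $V\setminus R$), while the paper's is a one-shot bijection that avoids the preliminary reduction to edges but whose inverse requires a moment's thought.
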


\ob{
\begin{proof}
Let $\mm=(\de_1,\ldots, \de_n)$, where $\de_i=1$ if $i\in V$, and $\de_i=0$ otherwise. Observe that $G[V]$ is isomorphic to the graph $G^\mm$.
By definition of $H$, the coefficient $[\xx^V]H(\xx)$ counts the $G$-heaps of type $\mm$, or equivalently the acyclic orientations of $G[V]$. This proves~\eqref{eq:acyc}.  Let us now assume $V\neq\emptyset$. 
By definition of $P$, one gets $[\xx^V]P(\xx)=\frac{B}{|V|}$, where $B$ is the number pyramids of type $\mm$, or equivalently the number of acyclic orientations of $G[V]$ with a single source.
For $i\in V$, let $\mB_i$ be the set of acyclic orientations of $G[V]$ with unique source $i$. It is not hard to see that $|\mB_i|=|\mB_j|$ for all $i,j\in V$. Indeed, a bijection between $\mB_i$ and $\mB_j$ can be constructed as follows: given $\ga\in\mB_i$, reverse all the edges of $\ga$ on any directed path from $i$ to $j$. This proves~\eqref{eq:acyc-rooted}.
\end{proof}
}


We now complete the proof of Theorem~\ref{thm:main}. 
For any non-negative integers $i,j$,~\eqref{eq2} gives 
\begin{eqnarray}
(-1)^{n-i}\chi_G^{(i)}(-j)&=&\sum_{V_1\uplus\cdots \uplus V_{i+j}=[n]}~\prod_{k=1}^{i}[\xx^{V_k}]P(\xx)\prod_{\ell=1}^j[\xx^{V_{i+\ell}}]H(\xx),\label{eq3}
\end{eqnarray}
where the sum is over the tuples of disjoint sets $V_1,\ldots,V_{i+j}$ whose union is $[n]$.
Finally, by Lemma~\ref{lem:chrom2}, the right-hand side of~\eqref{eq3} can be interpreted as in Theorem~\ref{thm:main}.

\begin{remark}
\label{rem:multicolorings}
Equation~\eqref{eq:chrom} raises the question of interpreting the other coefficients of $T(\xx)^q$ combinatorially. 
So for $\mm\in\NN^n$, let us introduce the following polynomial
\begin{equation}
\label{eq:chi_m}
\chi_{G,\mm}(q):=[\xx^\mm]T(\xx)^q,
\end{equation}
so that $\chi_G(q)=\chi_{G,\1^n}(q)$. 
It is easy to interpret~\eqref{eq:chi_m} combinatorially: for any positive integer $q$, $\chi_{G,\mm}(q)$ counts the functions $f$ from the vertex set $[n]$ to the power set $2^{[q]}$ such that for any vertex $i\in [n]$, $|f(i)|=m_i$ and for adjacent vertices $i,j\in [n]$ of $G$, the sets $f(i)$ and $f(j)$ are disjoint. These are known as \emph{proper multicolorings} of $G$ of type $\mm$~\cite{ gasharov,stanley_graphcoloring}. 

Now, recalling the definition of the graph $G^\mm$, it is easy to see that 
\[\chi_{G,\mm}(q)=\frac{\chi_{G^\mm}(q)}{\mm!},\]
where $\mm!:=m_1!\cdots m_n!$. Indeed, there is a clear $\mm!$-to-1 correspondence between the proper colorings of $G^\mm$ and the multicolorings of $G$ of type $\mm$: to a proper coloring of $G^\mm$ one associates the multicoloring $f$ of $G$, where $f(i)$ is the set of colors used on the vertices $\{v_i^k\}_{k\in [m_i]}$ of $G^\mm$. On the one hand, this shows that all the coefficients of $T(\xx)^q$ are chromatic polynomials, up to a multiplicative constant. On the other hand, using~\eqref{eq:chi_m} and Theorem~\ref{thm:heap}, we get $(-1)^{|\mm|}\chi_{G,\mm}(-1)=[\xx^\mm]H(x)$ which is the number of heaps of type~$\mm$. Hence general heaps come up naturally in the context of proper multicolorings.
\end{remark}

\begin{remark} 
Various generalizations of the chromatic polynomials have been considered in the literature, and the above technique can be used to give a reciprocity theorem for those. In particular, the \emph{bivariate chromatic polynomial} $\chi_G(q,r)$ is defined in~\cite{bivariate} as the polynomial whose evaluation at $(q,r)\in\NN^2$ counts the $(q+r)$-colorings of $G$ such that adjacent vertices cannot receive the same color in $[q]$. 
It is easy to express this polynomial in terms of heaps, and use similar techniques as above to obtain a combinatorial interpretation for $(-1)^n\chi_G(-j,-k)$.
\ob{Namely, this counts the number of tuples $((V_1,\ga_1),\ldots, (V_j,\ga_j),V_{j+1},\ldots,V_{j+k})$ such that $\biguplus_{i=1}^{j+k}V_i=[n]$ and for all $i\in[j]$, $\ga_i$ is an acyclic orientation of $G[V_i]$. One can similarly get an interpretation for the evaluations $\frac{\partial^i}{\partial q^i}\chi_G(-j,-k)$ of the derivatives with respect to $q$.}
\end{remark}

\section{Special cases, and extensions}\label{sec:related}
In this section we discuss some reformulations and extensions of Theorem~\ref{thm:main}.

\subsection{Specializations of Theorem~\ref{thm:main}, and reformulation.} \label{sec:specializations}
We first establish the relation between Theorem~\ref{thm:main} and the results from ~\cite{greenezaslavsky,stanley_acyclic}.

Let us recall the seminal result of Stanley~\cite{stanley_acyclic} about the negative evaluations of the chromatic polynomial. Let $G=(V,E)$ be a graph, and let $\ga$ be an orientation of $G$. We say that a $q$-coloring of $G$ (that is, a function $f:V\to [q]$) has no \emph{$\ga$-descent} if the colors (that is, the values of $f$) never decrease strictly along the arcs of $\ga$.

\begin{prop}[{\cite[Theorem 1.2]{stanley_acyclic}}]\label{prop:stanley}
Let $G$ be a graph with $n$ vertices, and let $j$ be a non-negative integer. Then, $(-1)^n\chi_G(-j)$ is the number of pairs $(\ga,f)$, where $\ga$ is an acyclic orientation of $G$, and $f$ is a $j$-coloring without $\ga$-descent. In particular, $(-1)^n\chi_G(-1)$ is the number of acyclic orientations of $G$.
\end{prop}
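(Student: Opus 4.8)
The plan is to derive Proposition~\ref{prop:stanley} as the $i=0$ specialization of Theorem~\ref{thm:main}, and then to check that the resulting count of tuples matches Stanley's count of pairs $(\ga,f)$. Setting $i=0$ in Theorem~\ref{thm:main}, the quantity $(-1)^n\chi_G(-j)$ counts tuples $((V_1,\ga_1),\ldots,(V_j,\ga_j))$ where $V_1,\ldots,V_j$ are disjoint sets with $\bigcup_k V_k=[n]$ and each $\ga_k$ is an acyclic orientation of $G[V_k]$ (the constraint about a unique source at $\min(V_k)$ is vacuous since $i=0$, so the $V_k$ may be empty and the orientations are arbitrary acyclic orientations of the induced subgraphs). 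So the main task is to exhibit a bijection between such tuples and the pairs $(\ga,f)$ in the statement.

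First I would build the correspondence in the direction from pairs $(\ga,f)$ to tuples. Given an acyclic orientation $\ga$ of $G$ and a $j$-coloring $f$ with no $\ga$-descent, I set $V_k:=f^{-1}(k)$ for each $k\in[j]$, so that $(V_1,\ldots,V_j)$ is an ordered set partition of $[n]$ (with possibly empty blocks), and I let $\ga_k$ be the restriction of $\ga$ to the induced subgraph $G[V_k]$. Each $\ga_k$ is acyclic because a restriction of an acyclic orientation is acyclic, so this produces a valid tuple. For the reverse direction, given a tuple $((V_1,\ga_1),\ldots,(V_j,\ga_j))$, I recover $f$ by declaring $f(v)=k$ whenever $v\in V_k$, and I reconstruct a global orientation $\ga$ of $G$ by orienting each edge inside a block $V_k$ according to $\ga_k$, and orienting each edge between distinct blocks $V_k,V_\ell$ with $k<\ell$ from the block of smaller index to the block of larger index (i.e.\ in the direction of increasing color).

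The crux — and the step I expect to be the main obstacle — is verifying that this reverse map lands in the correct target, namely that the reconstructed $\ga$ is genuinely acyclic and that the pair $(\ga,f)$ has no $\ga$-descent, and then confirming the two maps are mutually inverse. The no-$\ga$-descent condition is immediate once acyclicity is settled: every arc either stays within a block (colors equal) or points from a lower-indexed block to a higher-indexed one (colors strictly increase), so $f$ never decreases along an arc of $\ga$. For acyclicity, I would argue that any directed cycle in $\ga$ would have to be entirely contained in a single block $V_k$: indeed, the inter-block arcs all point in the direction of strictly increasing color, so a directed cycle can never use an inter-block arc (it could not return to its starting color). Hence any directed cycle lies inside some $G[V_k]$ and is therefore a directed cycle of the acyclic orientation $\ga_k$, a contradiction. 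This confirms $\ga$ is acyclic.

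Finally I would check that the two constructions are mutually inverse. Starting from a pair $(\ga,f)$, forming the tuple and then reconstructing gives back $f$ on the nose, and reconstructs the same $\ga$: intra-block arcs are restored by the $\ga_k=\ga|_{G[V_k]}$, while each inter-block arc of the original $\ga$ between colors $k<\ell$ already pointed from color $k$ to color $\ell$ precisely because $f$ had no $\ga$-descent, so the reconstruction rule reproduces the original orientation. Conversely, starting from a tuple, passing to $(\ga,f)$ and back returns the same ordered partition (via the fibers of $f$) and the same $\ga_k$ (via the restrictions of $\ga$). This establishes the bijection and hence the equality of the two counts, proving Proposition~\ref{prop:stanley}. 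The special case $(-1)^n\chi_G(-1)$ then follows by taking $j=1$: the only $1$-coloring is constant, it has no descents trivially, and the correspondence reduces to counting acyclic orientations of $G$ directly.
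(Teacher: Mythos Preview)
Your proposal is correct and follows essentially the same approach as the paper: derive Proposition~\ref{prop:stanley} as the $i=0$ case of Theorem~\ref{thm:main} via the bijection that encodes $f$ by its fibers $V_k=f^{-1}(k)$, restricts $\ga$ to each $G[V_k]$, and in the reverse direction orients inter-block edges in the direction of increasing color. The paper sketches this equivalence more tersely (stating the two conditions on $\ga$ given $f$ and asserting they are ``easily seen to be sufficient''), while you spell out the acyclicity check and the inverse verification explicitly, but the argument is the same.
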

As we now explain, Proposition~\ref{prop:stanley} is equivalent to the case $i=0$ of Theorem~\ref{thm:main}. Let $\mC_j$ be the set of pairs $(\ga,f)$, where $\ga$ is an acyclic orientation of $G$, and $f$ is a $j$-coloring without $\ga$-descent. A $j$-coloring $f$ can be encoded by the tuple $(V_1,\ldots,V_j)$, where $V_k=f^{-1}(k)$ is the set of vertices of color $k$. Now given $f$, the orientations $\ga$ such that $(\ga,f)\in \mC_j$ are such that for all $k\in[j]$ the restriction $\ga_k$ of $\ga$ to $G[V_k]$ is acyclic, and for all $\ell>k$ every edge between $V_k$ and $V_\ell$ is oriented toward its endpoint in $V_\ell$. These two conditions are easily seen to be sufficient. Hence, pairs $(\ga,f)\in \mC_j$ are uniquely determined by choosing the ordered partition $(V_1,\ldots,V_j)$ and the acyclic orientations $\ga_1,\ldots,\ga_j$ of $G[V_1],\ldots,G[V_j]$. This shows the equivalence between Proposition~\ref{prop:stanley} and the case $i=0$ of Theorem~\ref{thm:main}.\\

Next we recall the result of Greene and Zaslavsky~\cite{greenezaslavsky} about the coefficients of the chromatic polynomial. We need to define the \emph{source-components} of an acyclic orientation $\ga$ of $G=([n],E)$. 
For $i\in [n]$, let $R_i$ be the set of vertices reachable from $i$ by a directed path of $\ga$ (with $i\in R_i$). We now define some subsets of vertices $S_1,S_2,\ldots$ recursively as follows. For $k\geq 1$, if $\bigcup_{i<k}S_i=[n]$, then we define $S_k=\emptyset$. Otherwise, we define $S_k=R_m\setminus \bigcup_{i<k}S_i$, where $m=\min\left([n]\setminus \bigcup_{i<k}S_i\right)$. The non-empty subsets $S_k$ are called the \emph{source-components} of $\ga$. The source components are represented for various acyclic orientations in Figure \ref{fig:example-graph}.
Note that the source-components of an orientation $\ga$ form an ordered partition of $[n]$, and that the restriction of $\ga$ to each subgraph $G[S_k]$ is an acyclic orientation with single source $\min(S_k)$.

\fig{width=\linewidth}{example-graph-compo}{The source-components of the 14 acyclic orientations of the graph of Figure~\ref{fig:example-graph}.}


\begin{prop}[{\cite[Theorem 7.4]{greenezaslavsky}}]\label{prop:greenezaslavsky}
Let $G=([n],E)$ be a graph, and let $i$ be a non-negative integer. Then, $(-1)^{n-i}[q^i]\chi_G(q)$ is the number of acyclic orientations of $G$ with exactly $i$ source-components. In particular, $(-1)^{n-1}[q^1]\chi_G(q)$ is the number of acyclic orientations with single source 1.
\end{prop}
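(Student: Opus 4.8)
The plan is to derive Proposition~\ref{prop:greenezaslavsky} from the case $j=0$ of Theorem~\ref{thm:main}, which has already been established. The bridge from a coefficient of $\chi_G$ to a derivative is the Taylor expansion at $0$: since $\chi_G$ is a polynomial, $[q^i]\chi_G(q)=\chi_G^{(i)}(0)/i!$. I would therefore write
\[
(-1)^{n-i}[q^i]\chi_G(q)=\frac{1}{i!}\,(-1)^{n-i}\chi_G^{(i)}(0),
\]
and apply Theorem~\ref{thm:main} with $j=0$ to interpret $(-1)^{n-i}\chi_G^{(i)}(0)$ as the number of ordered tuples $((V_1,\ga_1),\ldots,(V_i,\ga_i))$ in which the $V_k$ are nonempty, pairwise disjoint, cover $[n]$, and each $\ga_k$ is an acyclic orientation of $G[V_k]$ with unique source $\min(V_k)$. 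Since the $V_k$ are nonempty and disjoint they are pairwise distinct, so each unordered collection arises from exactly $i!$ ordered tuples; dividing by $i!$ shows that $(-1)^{n-i}[q^i]\chi_G(q)$ counts these unordered collections, equivalently the tuples normalized so that $\min(V_1)<\cdots<\min(V_i)$.

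It then remains to build a bijection between such minimum-ordered tuples and the acyclic orientations of $G$ with exactly $i$ source-components. The forward map is already available: given such an orientation $\ga$, its source-components $S_1,\ldots,S_i$ partition $[n]$ into $i$ nonempty parts ordered by increasing minima, and the restriction of $\ga$ to each $G[S_k]$ is acyclic with unique source $\min(S_k)$, as recalled just before the proposition. The content is the inverse map, which I expect to be the main obstacle. The key point is that the source-component structure forces every cross-edge: if $a<b$ and $e$ is an edge of $G$ between $S_a$ and $S_b$ oriented toward its endpoint $w\in S_b$ with tail $u\in S_a$, then $\min(S_a)$ reaches $u$ (every vertex of $S_a\subseteq R_{\min(S_a)}$ is reachable from $\min(S_a)$) and hence reaches $w$, giving $w\in R_{\min(S_a)}\setminus\bigcup_{c<a}S_c=S_a$, contradicting $w\in S_b$. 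Thus each cross-edge points toward the part of smaller minimum, and the full orientation is determined by the tuple: orient $G[S_k]$ by $\ga_k$ and every cross-edge toward the part of smaller minimum. This defines the inverse candidate.

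Finally I would check that this reconstruction lands in the target set. Acyclicity holds because cross-edges pass only from a part of larger minimum to one of smaller minimum, so a directed cycle could never return to a higher part and would have to lie inside a single $G[S_k]$, contradicting acyclicity of $\ga_k$. To see that the source-components of the reconstructed orientation are exactly $S_1,\ldots,S_i$, I would argue by induction on $k$: in an acyclic orientation every vertex is reachable from a source, so the unique source $\min(S_k)$ reaches all of $S_k$, while along cross-edges it reaches only parts of strictly smaller minimum, all of which are already contained in $\bigcup_{c<k}S_c$; hence $R_{\min(S_k)}\setminus\bigcup_{c<k}S_c=S_k$, with $\min(S_k)=\min\bigl([n]\setminus\bigcup_{c<k}S_c\bigr)$ by the ordering of minima. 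The two maps are then mutual inverses, establishing the bijection and the proposition. The ``in particular'' statement is the case $i=1$: a single source-component means $R_1=[n]$, which for an acyclic orientation is equivalent to vertex $1$ being the unique source.
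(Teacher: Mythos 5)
Your proof is correct and follows essentially the same route as the paper: the paper also deduces Proposition~\ref{prop:greenezaslavsky} from the case $j=0$ of Theorem~\ref{thm:main}, using the same bijection between acyclic orientations with $i$ source-components and $\min$-ordered tuples $((S_1,\ga_1),\ldots,(S_i,\ga_i))$ (with cross-edges forced toward the part of smaller minimum), together with the same $i!$ factor relating ordered tuples to $\chi_G^{(i)}(0)=i!\,[q^i]\chi_G(q)$. Your write-up merely fills in details (the cross-edge forcing and the inductive identification of source-components) that the paper dismisses as ``easily seen.''
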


\begin{example} The graph $G$ in Figure~\ref{fig:example-graph}, has 1 (resp. 4, 6, 3) acyclic orientations with 4 (resp. 3, 2, 1) source-components. This matches the coefficients of $\chi_G(q)=q^4-4q3 + 6q2- 3q$.
\end{example}

As we now explain, Proposition~\ref{prop:greenezaslavsky} is equivalent to the case $j=0$ of Theorem~\ref{thm:main}. Let $\mA_i$ be the set of acyclic orientations of $G$ with exactly $i$ source-components. Let $\ga\in \mA_i$, and let $S_1,\ldots,S_i$ be its source-components. The sets $S_1,\ldots,S_i$ clearly satisfy
\begin{compactitem}
\item[(i)] $S_1,\ldots,S_i$ are disjoint sets and $\bigcup_{k=1}^i S_k=[n]$,
\item[(ii)] for all $k\in[i]$ the restriction $\ga_k$ of $\ga$ to the subgraph $G[S_k]$ is an acyclic orientation with single source $\min(S_k)$, 
\item[(iii)] for all $\ell>k$, any edge between $S_k$ and $S_\ell$ is directed toward its endpoint in~$S_k$.
\item[(iv)] $\min(S_1)<\min(S_2)<\cdots<\min(S_i)$,
\end{compactitem}
These conditions are easily seen to be sufficient: an acyclic orientation $\ga$ has source-components $S_1,\ldots,S_i$ if and only if the conditions (i-iv) hold. Moreover, the tuple $((S_1,\ga_1),\ldots, (S_i,\ga_i))$ uniquely determines $\ga\in \mA_i$. Hence, Proposition~\ref{prop:greenezaslavsky} can be interpreted as stating that $(-1)^{n-i}[q^i]\chi_G(q)$ is the number of tuples $((S_1,\ga_1),\ldots, (S_i,\ga_i))$ satisfying (i-iv). Upon permuting the indices $\{1,\ldots,i\}$, we get that $i!(-1)^{n-i}[q^i]\chi_G(q)$ is the number of tuples $((S_1,\ga_1),\ldots, (S_i,\ga_i))$ satisfying conditions (i-iii), which is exactly the case $j=0$ of Theorem~\ref{thm:main}.\\

It is not hard to combine the above discussions to show that Theorem~\ref{thm:main} is equivalent to the following statement. 

\begin{cor}\label{cor:equivThm1}
Let $G$ be a graph, let $q$ be an indeterminate, and let $i,j$ be non-negative integers. 
Then $(-1)^{n-i}[q^i]\chi_G(q-j)$ is the number of pairs $(\ga,f)$, where $\ga$ is an acyclic orientation of $G$, and $f$ is a $(j+1)$-coloring of $G$ without $\ga$-descent, such that the restriction $\ga_1$ of $\ga$ to the subgraph $G[f^{-1}(1)]$ has exactly $i$ source-components (with the special case $i=0$ corresponding to $f^{-1}(1)=\emptyset$).
\end{cor}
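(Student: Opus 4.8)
The plan is to prove Corollary~\ref{cor:equivThm1} by showing that its count is in bijection with the tuples counted by Theorem~\ref{thm:main}, combining the two reformulations already carried out for the special cases $i=0$ and $j=0$. First I would fix a pair $(\ga,f)$ of the type described, where $f$ is a $(j+1)$-coloring without $\ga$-descent and $\ga_1 := \ga|_{G[f^{-1}(1)]}$ has exactly $i$ source-components. I encode $f$ by the ordered partition $(W_1,\ldots,W_{j+1})$ with $W_k = f^{-1}(k)$, so that $\bigcup_k W_k = [n]$. Exactly as in the discussion of Proposition~\ref{prop:stanley}, the no-$\ga$-descent condition means that for every $\ell > k$ each edge between $W_k$ and $W_\ell$ is oriented toward its endpoint in $W_\ell$, and that the restriction $\ga|_{G[W_k]}$ is acyclic for each $k$; conversely these conditions on the ordered partition together with a choice of acyclic orientation on each block reconstruct $(\ga,f)$ uniquely. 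Thus the pair $(\ga,f)$ is equivalent to the data of an ordered partition $(W_1,\ldots,W_{j+1})$ together with acyclic orientations $\de_1,\ldots,\de_{j+1}$ of $G[W_1],\ldots,G[W_{j+1}]$, subject to the additional constraint that $\de_1$ (the orientation of the first block) has exactly $i$ source-components.

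Next I would apply the source-component decomposition of Proposition~\ref{prop:greenezaslavsky} to the first block alone. By the reformulation given just after Proposition~\ref{prop:greenezaslavsky}, an acyclic orientation $\de_1$ of $G[W_1]$ with exactly $i$ source-components corresponds to an ordered sequence of blocks $S_1,\ldots,S_i$ partitioning $W_1$, equipped with acyclic orientations $\ga_1,\ldots,\ga_i$ of $G[S_1],\ldots,G[S_i]$, each $\ga_k$ having its unique source at $\min(S_k)$, and satisfying conditions (i)--(iv) of that discussion; in particular the $S_k$ are ordered by $\min(S_1)<\cdots<\min(S_i)$. Forgetting the ordering among the $S_k$ introduces an $i!$ overcounting, which is precisely the factor converting $(-1)^{n-i}[q^i]\chi_G(q)$ into the $j=0$ statement of Theorem~\ref{thm:main}. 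So I would first assemble the tuple with ordered $S_k$'s and then pass to the unordered count, supplying the factor $i!$.

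Putting these together, a pair $(\ga,f)$ as in the corollary corresponds bijectively to the data of $i$ blocks $S_1,\ldots,S_i$ (carrying source-rooted acyclic orientations $\ga_1,\ldots,\ga_i$) whose union is $W_1$, together with the remaining $j$ blocks $W_2,\ldots,W_{j+1}$ (carrying arbitrary acyclic orientations). Relabeling $V_k := S_k$ for $k\in[i]$ and $V_{i+\ell} := W_{\ell+1}$ for $\ell\in[j]$, this is exactly a tuple $((V_1,\ga_1),\ldots,(V_{i+j},\ga_{i+j}))$ of the form counted by Theorem~\ref{thm:main}, except that the first $i$ blocks come with an imposed ordering by minimal element (condition (iv)) while Theorem~\ref{thm:main} imposes no such order and treats the first $i$ blocks as an ordered tuple with no constraint. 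I would reconcile this by the standard $i!$ permutation argument: the corollary's count with the ordering constraint (iv) equals $\tfrac{1}{i!}$ times Theorem~\ref{thm:main}'s count. Since Proposition~\ref{prop:greenezaslavsky}'s equivalence already absorbs the same $i!$ when translating $[q^i]\chi_G(q)$ into the $j=0$ case, the $i!$ factors cancel and the identity $(-1)^{n-i}[q^i]\chi_G(q-j)$ equals the Theorem~\ref{thm:main} count follows.

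The one genuine verification, and the step I expect to require the most care, is the algebraic translation linking the polynomial coefficient $[q^i]\chi_G(q-j)$ to the derivative evaluation in Theorem~\ref{thm:main}: since $\chi_G^{(i)}(-j) = i!\,[q^i]\chi_G(q-j)$ by Taylor expansion of $\chi_G$ about $-j$, the factor $i!$ on the combinatorial side must match exactly this analytic $i!$. I would make this explicit, noting $\chi_G(q-j)=\sum_{i\ge0}\tfrac{\chi_G^{(i)}(-j)}{i!}\,q^i$, so that $(-1)^{n-i}[q^i]\chi_G(q-j)=\tfrac{1}{i!}(-1)^{n-i}\chi_G^{(i)}(-j)$, and then check that the $\tfrac{1}{i!}$ here is precisely the ordering-constraint factor produced by condition (iv) above, closing the bijective loop.
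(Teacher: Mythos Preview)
Your proposal is correct and follows precisely the approach the paper indicates (but does not write out): the paper simply says ``It is not hard to combine the above discussions to show that Theorem~\ref{thm:main} is equivalent to the following statement,'' referring to the Stanley correspondence (pairs $(\ga,f)$ $\leftrightarrow$ ordered partitions with acyclic orientations on each block) and the Greene--Zaslavsky source-component decomposition applied to the first block. Your handling of the $i!$ factor---matching the Taylor coefficient $\chi_G^{(i)}(-j)=i!\,[q^i]\chi_G(q-j)$ against the removal of the ordering constraint $\min(S_1)<\cdots<\min(S_i)$---is exactly the right bookkeeping, and the bijection is well-defined because the cross-edges between blocks are uniquely determined by conditions (iii) and the no-descent rule.
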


\subsection{Generalization of Theorem~\ref{thm:main} and relation to results by Gessel and Lass.}
In this subsection we establish a generalization of Theorem~\ref{thm:main}, which extends results from Gessel~\cite{gessel:chromatic-heaps} and Lass~\cite{lass:chromatic-heaps}.

\begin{thm}\label{thm:generalized}
Let $G=([n],E)$ be a graph. Let $d$ be a non-negative integer such that the vertices $1,2,\ldots,d$ are pairwise adjacent. 
Let $q$ be an indeterminate, and let
\begin{equation}\label{def:Pd}
\Pd(q):=\frac{\chi_G(q)}{q(q-1)\cdots(q-d+1)},
\end{equation}
with the special case $d=0$ being interpreted as $\Pzero(q)=\chi_G(q)$. Then $\Pd(q)$ is a polynomial in $q$ such that for all non-negative integers $i,j$, the evaluation $(-1)^{n-d-i}\,\Pd^{(i)}(-j)$ is the number of tuples $((V_1,\ga_1),\ldots,(V_{d+i+j},\ga_{d+i+j}))$ such that 
\begin{compactitem}
\item $V_1,\ldots,V_{d+i+j}$ are disjoint subsets of vertices, such that $\bigcup_k V_k=[n]$, and for all $k\in [d]$, $k\in V_k$,
\item for all $k\in [d+i+j]$, $\ga_k$ is an acyclic orientation of $G[V_k]$, and if $k\leq d+i$ then $V_k\neq \emptyset$ and $\ga_k$ has a unique source which is the vertex $\min(V_k)$.
\end{compactitem}
\end{thm}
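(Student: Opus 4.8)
The plan is to follow the generating-function strategy used for Theorem~\ref{thm:main}, but to first isolate the clique $\{1,\ldots,d\}$ inside the trivial-heap series $T(\xx)$. Since the vertices $1,\ldots,d$ are pairwise adjacent, a trivial heap (equivalently, an independent set of $G$) contains at most one of them, so I can write
\[
T(\xx)=A(\xx)+\sum_{k=1}^d x_k\,B_k(\xx),
\]
where $A(\xx)$ is the generating function of trivial heaps avoiding $\{1,\ldots,d\}$ and $x_kB_k(\xx)$ collects those containing a piece of type $k$. Crucially, $A$ and all the $B_k$ are power series in $x_{d+1},\ldots,x_n$ only, and $T_k(\xx)=x_kB_k(\xx)$ in the notation of Section~\ref{sub:heap_enumeration}.

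First I would extract the coefficient of $x_1\cdots x_d$ in $T(\xx)^q=\exp(q\ln T(\xx))$. Writing $T=A\bigl(1+\sum_k x_k B_k/A\bigr)$ and using $(1+u)^q=\sum_m\binom{q}{m}u^m$, only the $u^d$ term contributes a monomial linear in each of $x_1,\ldots,x_d$, which gives $[x_1\cdots x_d]\,T(\xx)^q=q(q-1)\cdots(q-d+1)\,A(\xx)^{q-d}\prod_{k=1}^d B_k(\xx)$. Extracting the remaining variables via Lemma~\ref{lem:chrom1} then yields the clean identity
\[
\Pd(q)=[x_{d+1}\cdots x_n]\Bigl(A(\xx)^{q-d}\prod_{k=1}^d B_k(\xx)\Bigr),
\]
which simultaneously proves that $\Pd$ is a polynomial, since $A$ has constant term $1$ and so the coefficient of each monomial is polynomial in $q$. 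Differentiating $i$ times in $q$, applying the degree-$(n-d)$ sign flip $\xx\mapsto-\xx$, and setting $q=-j$ then expresses $(-1)^{n-d}\Pd^{(i)}(-j)$ as $[x_{d+1}\cdots x_n]\bigl((\ln A(-\xx))^i A(-\xx)^{-d-j}\prod_k B_k(-\xx)\bigr)$, exactly as in the passage leading to~\eqref{eq2}.

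The key step is to match this against the pyramid and heap series of the full graph $G$. Let $\widehat{P}_k(\xx)$ be the generating function of pyramids whose minimal piece has type $k$; applying Lemma~\ref{lem:heaps} with $S=\{k\}$ gives $\widehat{P}_k(\xx)=H_{\{k\}}(\xx)-1=-T_k(-\xx)H(\xx)$, and since $T_k(\xx)=x_kB_k(\xx)$ this collapses to the crucial relation $\widehat{P}_k(\xx)=x_k\,B_k(-\xx)\,H(\xx)$. Combining this with $P(\xx)=\ln H(\xx)$ and with $A(-\xx)^{-1}=H(\xx)\bigm|_{x_1=\cdots=x_d=0}$ (obtained by setting the clique variables to $0$ in $H=1/T(-\xx)$), I would rewrite the candidate right-hand side: in $[x_1\cdots x_n]\prod_{k=1}^d\widehat{P}_k(\xx)\,P(\xx)^iH(\xx)^j$ the factor $x_1\cdots x_d$ forces extraction of $x_{d+1}\cdots x_n$ from the remaining series evaluated at $x_1=\cdots=x_d=0$, and the identities above turn this precisely into the expression for $(-1)^{n-d}\Pd^{(i)}(-j)$ found in the previous step (the two sign powers $(-1)^i$ cancel against each other, and $(-1)^{n-d+i}=(-1)^{n-d-i}$). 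This establishes
\[
(-1)^{n-d-i}\Pd^{(i)}(-j)=[x_1\cdots x_n]\prod_{k=1}^d\widehat{P}_k(\xx)\,P(\xx)^iH(\xx)^j .
\]

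Finally I would read off the combinatorial meaning by expanding over ordered set partitions $V_1\uplus\cdots\uplus V_{d+i+j}=[n]$, using Lemma~\ref{lem:chrom2} for the $P$ and $H$ factors and the fact that $[\xx^{V_k}]\widehat{P}_k$ counts acyclic orientations of $G[V_k]$ with unique source $k$. The one subtlety to verify is that the first $d$ factors genuinely enforce the statement's hypotheses: a nonzero $\widehat{P}_k$-term requires $k\in V_k$ for every $k\in[d]$, and since $1,\ldots,d$ are pairwise adjacent and the $V_k$ are disjoint, no $V_k$ with $k\le d$ can contain a vertex smaller than $k$, whence $\min(V_k)=k$ and ``unique source $k$'' coincides with ``unique source $\min(V_k)$''. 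I expect this bookkeeping, together with the verification of the relation $\widehat{P}_k(\xx)=x_kB_k(-\xx)H(\xx)$ through the singleton case of Lemma~\ref{lem:heaps}, to be the main point requiring care; the polynomiality of $\Pd$ and the coefficient factorization are comparatively routine.
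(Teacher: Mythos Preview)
Your argument is correct and arrives at the same key identity as the paper, namely
\[
(-1)^{n-d-i}\,\Pd^{(i)}(-j)=[x_1\cdots x_n]\prod_{k=1}^d\bigl(H_{\{k\}}(\xx)-1\bigr)\,P(\xx)^i\,H(\xx)^j,
\]
after which the combinatorial reading is identical. The route you take to this identity, however, is genuinely different from the paper's.

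The paper obtains the starting formula
\[
\Pd(q)=[x_1\cdots x_n]\Bigl(\prod_{k=1}^d\frac{T_k(\xx)}{T(\xx)}\Bigr)T(\xx)^q
\]
by a one-line combinatorial observation: $\Pd(q)$ counts proper $q$-colorings in which vertex $k$ receives color $k$ for each $k\in[d]$. It then differentiates, sign-flips, and converts each factor $-T_k(-\xx)/T(-\xx)$ into $H_{\{k\}}(\xx)-1$ via Lemma~\ref{lem:heaps}. Everything stays in the full variable set $x_1,\ldots,x_n$.

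You instead exploit algebraically that $T(\xx)=A(\xx)+\sum_{k=1}^d x_kB_k(\xx)$ with $A,B_k$ free of $x_1,\ldots,x_d$, extract $x_1\cdots x_d$ from $T^q$ by the binomial theorem, and then match the result back to $P$ and $H$ by specializing $x_1=\cdots=x_d=0$ (using $A(-\xx)^{-1}=H(\xx)|_{x_1=\cdots=x_d=0}$ and $-\ln A(-\xx)=P(\xx)|_{x_1=\cdots=x_d=0}$). This is a bit longer and requires the extra bookkeeping of passing between the full and restricted variable sets, but it has the virtue of giving the polynomiality of $\Pd$ and the factor $q(q-1)\cdots(q-d+1)$ purely from the generating-function structure, without the separate observation that $\chi_G$ vanishes at $0,1,\ldots,d-1$. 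One small remark: in your last paragraph, the reason $\min(V_k)=k$ for $k\le d$ is simply that the $V_\ell$ are disjoint and each $\ell\in[d]$ is forced into $V_\ell$; the pairwise adjacency of $1,\ldots,d$ is not what is doing the work at that step.
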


Observe that the case $d=0$ of Theorem~\ref{thm:generalized} is Theorem~\ref{thm:main}. The special case $i=0$ for $d\in\{1,2\}$ was obtained by Gessel in~\cite[Thm 3.3 and 3.4]{gessel:chromatic-heaps}.

\begin{example}
For the graph $G$ represented in Figure~\ref{fig:example-graph-compo}, we have $\Ptwo(q)=q^2-3q+3$. Theorem \ref{thm:generalized} in the case $d=2,i=1,j=0$ (correctly) predicts that there are exactly  $-\Ptwo'(0)=3$ triples $((V_1,\ga_1), (V_2,\ga_2),(V_3,\ga_3))$, such that $1\in V_1$, $2\in V_2$, $V_1\uplus V_2\uplus V_3=[4]$, and for all $k\in [3]$, $\gamma_k$ is an acyclic orientation of $G[V_k]$ with unique source $\min(V_k)$. 
\end{example}

\begin{proof}
Since the vertices $1,2,\ldots,d$ are pairwise adjacent, we know that $\chi_G(k)=0$ for all $k\in\{0,\ldots,d-1\}$. Since these integers are roots of $\chi_G(q)$, this polynomial is divisible by $q(q-1)\cdots(q-d+1)$. Hence $\Pd(q)$ is a polynomial. 
We now prove the interpretation of $(-1)^{n-d-i}\Pd^{(i)}(-j)$. Fix an integer $q>d$. Note that in any proper $q$-coloring of~$G$, the vertices $1,\ldots,d$ have distinct colors in $[q]$. So it is easy to see that $\Pd(q)$ can be interpreted as the number of proper $q$-colorings such that for all $k$ in $[d]$ the vertex $k$ has color $k$. In other words, for all $k\in[d]$, the $q$-colorings counted by $\Pd(q)$ are such that the set of vertices colored $k$ are independent sets  containing the vertex $k$. 
Thus, reasoning as in the proof of~\eqref{eq:chrom}, we get the following expression of $\Pd(q)$ in terms of trivial heaps:
\begin{equation}\label{eq:chrom-gle}
\Pd(q)=[x_1\cdots x_n]\left(\prod_{k=1}^d\frac{T_k(\xx)}{T(\xx)}\right)T(\xx)^q,
\end{equation}
where $T_k(\xx)=\sum_{\he\in \mT_k}\xx^\he$ and $\mT_k$ is the set of trivial heaps containing a piece of type $k$. Again, this equation holds for an indeterminate $q$, because both sides are polynomials in $q$. Differentiating~\eqref{eq:chrom-gle} with respect to $q$ ($i$ times), and setting $q=-j$ gives 
$$(-1)^{n-d-i}\,\Pd^{(i)}(-j)=[x_1\cdots x_n]\left(\prod_{k=1}^d-\frac{T_k(-\xx)}{T(-\xx)}\right)\left(-\ln(T(-\xx))\right)^i\left(\frac{1}{T(-\xx)}\right)^{j}.$$
By Lemma~\ref{lem:heaps}, 
$\ds -\frac{T_k(-\xx)}{T(-\xx)}=\frac{T_{\ov{\{k\}}}(-\xx)}{T(-\xx)}-1=H_{\{k\}}(\xx)-1,$ 
which together with Theorem~\ref{thm:heap} gives
\begin{equation}\label{eq:Pd-negative}
(-1)^{n-d-i}\Pd^{(i)}(-j)=[x_1\cdots x_n]\left(\prod_{k=1}^dH_{\{k\}}(\xx)-1\right)\cdot P(\xx)^i \cdot H(\xx)^{j}.
\end{equation}
Observe that for any sets of vertices $S,V\subseteq [n]$,  the coefficient $[\xx^V]H_{S}(\xx)$ is the number of acyclic orientations of $G[V]$ whose sources are all in $S$. Hence, for any set $V\subseteq [n]$, 
$$
[\xx^V]\left(H_{\{k\}}(\xx)-1\right)=
\left|
\begin{array}{ll}
\#\textrm{ acyclic orientations of } G[V]\textrm{ with unique source }k, \textrm{ if }k\in V,\\[1mm] 
0 \textrm{ otherwise}.
\end{array}\right.
$$
Using this together with Lemma~\ref{lem:chrom2}, we see that~\eqref{eq:Pd-negative} gives the claimed interpretation of $(-1)^{n-d-i}\,\Pd^{(i)}(-j)$.
\end{proof}

Theorem~\ref{thm:generalized} could equivalently be stated as giving an interpretation for the coefficients of the polynomial $\Pd(q-j)$ for all $j,d\geq 0$. We will next give an interpretation for the coefficients of $\Pd(q+1)$ for all $d>0$.

Let us first recall a classical result of Crapo~\cite{Crapo:bipolar-orient}. Let $u,v$ be two adjacent vertices of a graph $G$. An acyclic orientation of $G$ is called \emph{$(u,v)$-bipolar} if it has unique source $u$ and unique sink $v$. A classical result of Crapo~\cite{Crapo:bipolar-orient} is that  $(-1)^{n}[q^1]\Pone(q+1)$ is the number of $(u,v)$-bipolar orientations of $G$ (which is independent of $u,v$). We mention that, for a connected graph, $\Pone(q+1)$ is related to the Tutte polynomial $T_G(x,y)$ by $\Pone(q+1)=(-1)^{n-1}T_G(-q,0)$, hence $(-1)^{n}[q^1]\Pone(q+1)=[x^1y^0]T_G(x,y)$. Crapo's result was recovered using the theory of heaps in~\cite[Thm 3.1]{gessel:chromatic-heaps}. In Lass~\cite[Thm 5.2]{lass:chromatic-heaps}, an interpretation was given for every coefficient of the polynomial $\Pone(q+1)$ for a \emph{connected} graph $G$. Following this lead, we obtain the following result for connected graphs having a set of $d$ pairwise adjacent vertices.

\begin{thm}\label{thm:extension-Lass}
Let $G=([n],E)$ be a connected graph. Let $d$ be a positive integer such that the vertices $1,2,\ldots,d$ are pairwise adjacent. Let $q$ be an indeterminate, and let $\Pd(q)$ be the polynomial defined by~\eqref{def:Pd}. 
Upon relabeling the vertices of $G$, one can assume that for all $k>1$ the vertex labeled $k$ is adjacent to a vertex of label less than~$k$. 
Then for all $i\geq 0$, $(-1)^{n-d-i}[q^i]\Pd(q+1)$
is the number of acyclic orientations of $G$ having exactly $d+i$ source-components such that the vertices $1,2,\ldots,d$ are in different source-components and $1$ is the unique sink. 
\end{thm}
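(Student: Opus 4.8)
The plan is to collapse the statement, for all $i$ at once, into a single generating-function identity graded by the number of source-components, and then to prove that identity by a sign-reversing involution whose fixed points are exactly the desired orientations. Let $c(\Gamma)$ denote the number of source-components of an acyclic orientation $\Gamma$, and introduce a variable $t$ marking it. Since $\sum_i(-1)^{n-d-i}[q^i]\Pd(q+1)\,t^i=(-1)^{n-d}\Pd(1-t)$, it suffices to prove
\[
(-1)^{n-d}\Pd(1-t)=\sum_{\Gamma}t^{\,c(\Gamma)-d},
\]
where the sum ranges over the acyclic orientations $\Gamma$ of $G$ having unique sink $1$ with $1,\dots,d$ in distinct source-components; extracting $[t^i]$ then yields the theorem. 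I would obtain the left-hand side in heap form exactly as in the derivation of~\eqref{eq:Pd-negative}, but starting from~\eqref{eq:chrom-gle} and substituting $q\mapsto -t$: using $\ln T(-\xx)=-P(\xx)$, $T_k(-\xx)=-(H_{\{k\}}(\xx)-1)T(-\xx)$ and $T(-\xx)=1/H(\xx)$, this gives
\[
(-1)^{n-d}\Pd(1-t)=[x_1\cdots x_n]\Big(\prod_{k=1}^{d}(H_{\{k\}}(\xx)-1)\Big)\,H(\xx)^{t}\,T(-\xx).
\]
Here $H(\xx)^t=\exp(tP(\xx))$ grades by source-components: since $[\xx^V]\tfrac{P^m}{m!}$ counts partitions of $V$ into $m$ unordered $\min$-rooted pyramids, the Greene--Zaslavsky decomposition recalled in Section~\ref{sec:related} gives $[\xx^V]H(\xx)^t=\sum_{\Gamma}t^{c(\Gamma)}$ over acyclic orientations of $G[V]$.

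Next I would read off the combinatorial content of the right-hand side. Expanding, the coefficient of $x_1\cdots x_n$ is a signed sum over partitions $[n]=V_1\uplus\cdots\uplus V_d\uplus Y\uplus U$ equipped with: for each $k\in[d]$ an acyclic orientation of $G[V_k]$ with unique source $k$ (from $H_{\{k\}}-1$, forcing $k\in V_k$); an acyclic orientation of $G[Y]$ weighted $t^{c}$; and an independent set $U$ weighted $(-1)^{|U|}$ (from $T(-\xx)$). By Lemma~\ref{lem:chrom2} the number of orientations of $G[V_k]$ with unique source $k$ equals the number with unique source $\min V_k$, so I may take each $V_k$ to be a genuine source-component. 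Orienting every edge between two blocks toward the block of smaller minimum glues $V_1,\dots,V_d$ and the source-components of the $Y$-orientation, by the Greene--Zaslavsky correspondence, into an acyclic orientation $\Gamma_0$ of $G[[n]\setminus U]$ whose source-components are exactly these blocks; as $1,\dots,d$ lie in distinct blocks they lie in distinct source-components. Finally, turning the vertices of $U$ into sinks produces an acyclic orientation $\Gamma$ of $G$, and $U$ is automatically independent since two sinks cannot be adjacent. This identifies the right-hand side with $\sum_{(\Gamma,U)}(-1)^{|U|}t^{\,c(\Gamma|_{[n]\setminus U})-d}$, the sum over pairs where $U$ is a set of sinks of $\Gamma$ with $U\cap[d]=\emptyset$ and $1,\dots,d$ lie in distinct source-components of $\Gamma|_{[n]\setminus U}$.

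It then remains to build a sign-reversing, weight-preserving involution on these pairs whose fixed points are the $(\Gamma,\emptyset)$ with $\Gamma$ a desired orientation. I would toggle the smallest sink $v\neq 1$ of $\Gamma$ between $U$ and $[n]\setminus U$. The crucial point, where the hypotheses enter, is weight preservation. Because $G$ is connected and relabeled so that every vertex $v>1$ has a neighbor of smaller label, any sink $v>1$ receives an edge from some $w<v$; as $w$ then has an outgoing edge it is not a sink, so $w\notin U$ is present in $\Gamma|_{[n]\setminus U}$ and places $v$ in the source-component of $w$. Hence a sink $v\neq1$ is never its own source-component, so adding or deleting it changes neither $c(\Gamma|_{[n]\setminus U})$ nor the distinctness of the components containing $1,\dots,d$; the weight $t^{c-d}$ is preserved while $(-1)^{|U|}$ flips. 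Since $\Gamma$ is unchanged, its set of sinks $\neq1$ is invariant, the smallest is a canonical choice, and the map is an involution. A pair is fixed precisely when $\Gamma$ has no sink other than $1$; as every $U$-vertex is a sink $\neq1$, this forces $U=\emptyset$ and makes $1$ the unique sink, i.e.\ $\Gamma$ is a desired orientation. Summing over fixed points and extracting $[t^i]$ finishes the proof.

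I expect the main obstacle to be precisely this involution, and in particular the verification that the connected relabeling guarantees every sink $v\neq 1$ is absorbed into an earlier source-component, so that toggling it leaves $c$ invariant; a close second is the careful justification, via the Greene--Zaslavsky source-component decomposition, that the glued object $\Gamma_0$ genuinely has the prescribed blocks as its source-components. The bookkeeping converting the $H(\xx)^t$ factor into the grading $t^{c-d}$, and reconciling the unique-source-$k$ blocks with $\min$-rooted source-components through Lemma~\ref{lem:chrom2}, should be routine once these two points are secured.
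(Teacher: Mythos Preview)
Your approach is correct and essentially the same as the paper's: both derive a heap expression from~\eqref{eq:chrom-gle}, interpret it as a signed sum over independent sets $U$ together with an orientation of the complement, and cancel via the same involution that toggles the smallest sink $\neq 1$ (using the labeling hypothesis to show such a sink is never its own source-component). The only cosmetic differences are that the paper works at fixed $i$ and absorbs the $T(-\xx)$ factor into the $k=1$ term (writing $-T_1(-\xx)$ instead of $(H_{\{1\}}(\xx)-1)T(-\xx)$), forcing $1\in U$; note also that your appeal to Lemma~\ref{lem:chrom2} is unnecessary, since $j\in V_j$ for all $j<k$ already forces $\min(V_k)=k$, so the unique source of $\gamma_k$ is automatically $\min(V_k)$.
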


Note that for the orientations described in Theorem~\ref{thm:extension-Lass}, the vertex 1 is necessarily alone in its source-component. In particular, in the special case $d=1$, and $i=1$ the orientations described have unique sink 1 and unique source 2, which gives Crapo's interpretation of $(-1)^{n}[q^1]\Pone(q+1)$ as counting $(2,1)$-bipolar orientations. The case $d=1$ of Theorem~\ref{thm:extension-Lass} is exactly~\cite[Thm 5.2]{lass:chromatic-heaps}. 
The case $d=2$ is equivalent to the case $d=1$ (because $\Pone(q+1)=q\,\Ptwo(q+1)$ and the vertices 1, 2 are necessarily in different source-components). The cases $d\geq 3$ are new.

\begin{proof}
Let $R_d(q)=\Pd(q+1)$, and let $c_i=[q^i]\Pd(q+1)=[q^i]R_d(q)=\frac{R^{(i)}(0)}{i!}$. By~\eqref{eq:chrom-gle}, 
\begin{equation*}
R_d(q)=[x_1\cdots x_n]\,T_1(\xx)\left(\prod_{k=2}^d\frac{T_k(\xx)}{T(\xx)}\right)T(\xx)^q,
\end{equation*}
for an indeterminate $q$. After differentiating with respect to $q$ ($i$ times) one gets
\begin{equation*}
(-1)^{n-d-i}c_{i}=[x_1\cdots x_n]\,\left(-T_1(-\xx)\right) \left(\prod_{k=2}^d-\frac{T_k(-\xx)}{T(-\xx)}\right)\frac{\left(-\ln(T(-\xx))\right)^i}{i!}.
\end{equation*}
Reasoning as in the proof of Theorem~\ref{thm:generalized} this gives:
\begin{eqnarray*}
(-1)^{n-d-i}c_{i}&=&[x_1\cdots x_n]\left(-T_1(-\xx)\right)\left(\prod_{k=2}^d(H_{\{k\}}(\xx)-1)\right)\, \frac{P(\xx)^i}{i!},\\
&=&\sum_{U\subseteq [n]}[\xx^U]\left(-T_1(-\xx)\right)\cdot [\xx^{\ov U}]\left(\prod_{k=2}^d(H_{\{k\}}(\xx)-1)\right) \frac{P(\xx)^i}{i!},
\end{eqnarray*}
where $\ov U:=[n]\setminus U$. 

For $V\subseteq [n]\setminus \{1\}$, let $\mS_V\equiv \mS_V(d,i)$ be the set of acyclic orientations of $G[V]$ having $d+i-1$ source-components, such that $2,\ldots,d$ are in different source-components (with $\mS_V=\emptyset$ whenever $V$ does not contain $\{2,\ldots,d\}$).
Reasoning as before, we see that  $|\mS_V|=\ds [\xx^V]\left(\prod_{k=2}^d(H_{\{k\}}(\xx)-1)\right)\frac{P(\xx)^i}{i!}$.
Hence, using the fact that $T_1$ is the generating function of the set $I$ of independent sets of $G$ containing the vertex 1, we get
\begin{equation}\label{eq:signed}
(-1)^{n-d-i}c_{i}=\sum_{U\in I}(-1)^{|U|-1}|\mS_{\ov U}|.
\end{equation}

We will now simplify this expression by defining a \emph{sign-reversing involution} $\phi$ on the set $\mS:=\{(U,\ga)~|~U\in I,~\ga\in \mS_{\ov U}\}$. Given $(U,\ga)\in \mS$ consider the orientation $\ov \ga$ which is the extension of~$\ga$ to the full graph $G$ obtained by orienting every edge incident to a vertex $u\in U$ toward $u$. It is not hard to see that $\ov \ga$ has $d+i$ source-components $S_1,\ldots,S_{d+i}$, such that $S_1=\{1\}$ and $S_2\setminus U,\ldots,S_{d+i}\setminus U$ are the source-components of~$\ga$. Indeed, it is clear that the first source-component $S_1$ is $\{1\}$ because 1 is a sink, and moreover no vertex $u\in U\setminus \{1\}$ can be the source of a source-component because $u$ is adjacent to a vertex with smaller label. 

We now define $\phi$ on $\mS$. Let $(U,\ga)\in \mS$, and let $Z$ be the set of sinks of $\ov \ga$. Note that $U\subseteq Z$ and $Z\in I$. If $Z=\{1\}$, then define $\phi(U,\ga)=(U,\ga)$. Otherwise we set $s=\min(Z \setminus \{1\})$ and consider two cases. If $s\in U$, we define $\phi(U,\ga)=(U\setminus \{s\},\ga')$, where $\ga'$ is the extension of $\ga$ to $G[\ov U \cup\{s\}]$ obtained by orienting every edge incident to $s$ toward $s$. If $s\notin U$, we define $\phi(U,\ga)=(U\cup \{s\},\ga')$, where $\ga'$ is the restriction of $\ga$ to $G[\ov U \setminus\{s\}]$. 

We know from the above discussion that in every case $\phi(U,\ga)\in \mS$. Moreover it is clear that $\phi$ is an involution (because the orientation $\ov \ga$ is unchanged by $\phi$), and that if $Z\neq \{1\}$, the contribution of the pairs $(U,\ga)$ and $\phi(U,\ga)$ to the right-hand side of~\eqref{eq:signed} will cancel out. Hence, the right-hand side of~\eqref{eq:signed} is the cardinality of the set $\mS'$ of pairs $(U,\ga)\in\mS$ such that $Z=U=\{1\}$. This gives the claimed interpretation of $(-1)^{n-d-i}c_{i}$ (upon identifying each element $(\{1\},\ga)$ in $\mS'$ with the orientation $\ov \ga$ of $G$ which is the extension of $\ga$ to $G$  obtained by orienting every edge incident to $1$ toward $1$).
\end{proof}

\section{Chromatic symmetric function}\label{sec:sym}
In this section we consider the chromatic symmetric function defined by Stanley in~\cite{stanley_symmetric}, and we obtain a symmetric function refinement of Theorem~\ref{thm:main}, as well as a ``superfication'' extension.


Let $G=([n],E)$ be a graph. We consider colorings of $G$ with colors in the set $\PP:=\{1,2,3,\ldots\}$ of positive integers. A function $f:V\to\PP$ is called  $\PP$\emph{-coloring}, and as before $f$ is said to be \emph{proper} if adjacent vertices get different colors. Let $\zz=(z_1,z_2,\ldots)$ be a set of variables indexed by $\PP$. The \emph{chromatic symmetric function} of~$G$ is the generating function of its proper $\PP$-colorings counted according to the number of times each color is used:
$$X_G(\zz)=\sum_{f \textrm{ proper } \PP\textrm{-coloring}}~~~\prod_{v\in[n]}z_{f(v)}.$$
Observe that $X_G(\zz)$ is a homogeneous symmetric function of degree $n$ in $\zz$, and that for every positive integer $j$,
\begin{equation}\label{eq:X-chi}
X_G(\1^j)=\chi_G(j),
\end{equation}
where $\1^j$ is the evaluation obtained by setting $z_i=1$ for all $i\in [j]$, and $z_i=0$ for all $i> j$.

\begin{example} \label{exp:chrom-sym}
For the graph $G$ represented in Figure \ref{fig:example-graph}, the chromatic symmetric function is easily seen to be
\begin{align*}
X_G(\zz)&=24\sum_{1\leq i<j<k<l}z_iz_jz_kz_l+4\sum_{1\leq i<j<k}(z_i^2z_jz_k+ z_iz_j^2z_k+ z_iz_jz^2_k) +2 \sum_{1\leq i<j} z_i^2z_j^2
\end{align*}
In particular, one gets
\[X_G(\1^j)=24\binom{j}{4}+12\binom{j}{3}+2\binom{j}{2}=j^4-4j^3+6j^2-3j,\]
which indeed coincides with the expression of $\chi_G(j)$ given in the caption of Figure~\ref{fig:example-graph}.
\end{example}

In~\cite{stanley_symmetric,stanley_graphcoloring} Stanley establishes many beautiful properties of $X_G$. Our goal is to recover and extend some of these results using the machinery of heaps. The starting point is the symmetric function analogue of Lemma~\ref{lem:chrom1}:
\begin{equation}\label{eq:sym-heap}
X_G(\zz)=[x_1\cdots x_n]\prod_{i= 1}^\infty T(z_i\xx),
\end{equation}
where $T(\xx)$ is the generating function of trivial $G$-heaps.

We first discuss the result of applying the \emph{duality mapping} to $X_G$. We recall some basic definitions. For a field $K$ of characteristic $0$, we denote by $\Sym_K(\zz)$ the algebra of symmetric functions in $\zz$, with coefficients in $K$. Hence, $X_G(\zz)\in \Sym_\QQ(\zz)\subseteq \Sym_K(\zz)$.
Let $e_k,h_k,p_k$ be the \emph{elementary}, \emph{complete} and \emph{power-sum} symmetric functions, which are defined by $e_0=h_0=p_0=1$, and for $k\in\PP$, 
$$e_k=\sum_{i_1<\cdots<i_k\in \PP}z_{i_1}\cdots z_{i_k},~\quad h_k=\sum_{i_1\leq \cdots\leq i_k\in \PP}z_{i_1}\cdots z_{i_k},~~\textrm{and}~\quad p_k=\sum_{i\in \PP}z_i^k.$$ 
Recall that $\Sym_K(\zz)$ is generated freely as a commutative $K$-algebra by each of these sets of symmetric functions. In other words, if $(g_k)_{k\geq 1}$ stands for any one of these families, then $(g_\la)_{\la}$ forms a basis of $\Sym_K(\zz)$, where $\la=(\la_1, \ldots,\la_k)$ runs through all integer partitions and $g_\la:=g_{\la_{1}}\cdots g_{\la_k}$.
Lastly, the \emph{duality mapping} $\om\equiv \om_\zz$ is defined as the algebra homomorphism of $\Sym_K(\zz)$ such that $\om(e_k)=h_k$. As is well known, $\om$ also satisfies $\om(h_k)=e_k$ and $\om(p_k)=(-1)^{k-1}p_k$. The following result is~\cite[Thm 4.2]{stanley_symmetric}, and we give an alternative proof.

\begin{prop}[\cite{stanley_symmetric}]\label{prop:stanley-sym}
With the above notation,
$$\om(X_G)(\zz)=\sum_{(\ga,f)}~\prod_{v\in[n]}z_{f(v)},$$
where the sum is over the set $\mC$ of pairs $(\ga,f)$ where $\ga$ is an acyclic orientation of $G$ and $f$ is a $\PP$-coloring without $\ga$-descent.
\end{prop}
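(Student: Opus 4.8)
The plan is to apply the duality map $\om_\zz$ directly to the heap expression for $X_G$ given in~\eqref{eq:sym-heap}, and to identify the result as the generating function of the pairs in $\mC$. The central observation is that $\om_\zz$ can be transported to an action on the variables $\xx$ inside the trivial-heap generating function $T(\xx)$, turning the product $\prod_i T(z_i\xx)$ (which produces trivial heaps, i.e.\ proper colorings) into $\prod_i H(z_i\xx)$ (which produces general heaps). Concretely, I would first recall that $T(\xx)$ is a polynomial whose monomials $\xx^W$ correspond to independent sets $W$, so writing $T(\xx)=\sum_W \xx^W$ lets us expand $\prod_{i\ge 1}T(z_i\xx)$ and see that~\eqref{eq:sym-heap} is literally the chromatic symmetric function. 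The key algebraic input is the relation $H(\xx)=1/T(-\xx)$ from Theorem~\ref{thm:heap}.

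Next I would make precise the commutation between $\om_\zz$ and the extraction $[x_1\cdots x_n]$. Since $X_G$ is homogeneous of degree $n$ and $\om$ acts on $\Sym_\QQ(\zz)$, I would expand $\prod_i T(z_i\xx)$ as a sum over monomials $\xx^W$ with $W$ ranging over tuples of independent sets, producing for each term a monomial symmetric-function-type object in $\zz$; applying $\om$ term-by-term and then re-extracting $[x_1\cdots x_n]$ is justified because only finitely many $W$-configurations contribute to the coefficient of $x_1\cdots x_n$. The cleanest route is to use the power-sum description: write $\ln T(z_i\xx)$ in terms of $p_k(\zz)$ evaluated at the heap generating data, so that $\prod_i T(z_i\xx)=\exp\!\big(\sum_k p_k(\zz)\,a_k(\xx)\big)$ for suitable coefficients $a_k(\xx)$, apply $\om(p_k)=(-1)^{k-1}p_k$, and recognize that the sign $(-1)^{k-1}$ precisely implements the substitution $\xx\mapsto -\xx$ needed to convert $T$ into $H$ via Theorem~\ref{thm:heap}. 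This yields
\[
\om(X_G)(\zz)=[x_1\cdots x_n]\prod_{i=1}^\infty H(z_i\xx).
\]

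Finally I would interpret the right-hand side combinatorially. By Lemma~\ref{lem:chrom2}, $[\xx^V]H(\xx)$ counts acyclic orientations of $G[V]$, so expanding $\prod_i H(z_i\xx)$ and extracting $[x_1\cdots x_n]$ sums, over all ordered partitions $[n]=\biguplus_i V_i$ (with color $i$ carrying weight $\prod_{v\in V_i}z_{f(v)}=\prod_{v\in V_i}z_i$), the product of the numbers of acyclic orientations $\ga_i$ of $G[V_i]$. Such data is exactly a $\PP$-coloring $f$ together with, for each color class, an acyclic orientation of the induced subgraph; gluing the $\ga_i$ into a single orientation $\ga$ of $G$ by directing every edge between distinct color classes toward the smaller color reproduces precisely an acyclic orientation $\ga$ of $G$ with $f$ having no $\ga$-descent (as in the argument following Proposition~\ref{prop:stanley}). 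This matches the set $\mC$ and completes the identification.

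The main obstacle I anticipate is justifying the term-by-term application of $\om_\zz$ to the infinite product $\prod_{i\ge1}T(z_i\xx)$ and establishing rigorously that $\om_\zz$ commutes with $[x_1\cdots x_n]$; this requires working in the correct completion of $\Sym_K(\zz)$ (or restricting to finitely many variables $z_1,\dots,z_N$ and noting the coefficient of $x_1\cdots x_n$ stabilizes) and confirming that the sign $(-1)^{k-1}$ from $\om(p_k)$ assembles exactly into the $T(-\xx)\to H(\xx)$ passage rather than some competing sign. Once that bookkeeping is settled, the combinatorial reading via Lemma~\ref{lem:chrom2} is routine.
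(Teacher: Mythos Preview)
Your proposal is correct and follows the same overall architecture as the paper's proof: start from~\eqref{eq:sym-heap}, establish the key identity
\[
\om\!\left(\prod_{i\ge 1}T(z_i\xx)\right)=\prod_{i\ge 1}H(z_i\xx),
\]
extract $[x_1\cdots x_n]$, and then interpret the expansion via Lemma~\ref{lem:chrom2} and the bijection described after Proposition~\ref{prop:stanley}. The one genuine difference is in how the displayed identity is proved. The paper factors the polynomial $Q(Z)=T(Z\xx)$ over the algebraic closure $\ov K$ as $\prod_k(1+t_kZ)$ and uses the generating-function identity $\om(\sum_k e_kt^k)=\sum_k h_kt^k$ on each linear factor, obtaining $\om(\prod_i Q(z_i))=\prod_i Q(-z_i)^{-1}$. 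Your route---writing $\prod_iT(z_i\xx)=\exp\bigl(\sum_{k\ge1}p_k(\zz)\,c_k(\xx)\bigr)$ with $c_k$ the degree-$k$ homogeneous piece of $\ln T(\xx)$, and then using $\om(p_k)=(-1)^{k-1}p_k$ together with $(-1)^kc_k(\xx)=c_k(-\xx)$---reaches the same conclusion without passing to $\ov K$. Both are standard plethystic computations; yours stays over $\QQ$ and is arguably more transparent about \emph{why} the sign from $\om$ becomes the substitution $\xx\mapsto-\xx$, while the paper's version makes the multiplicativity of $\om$ on the product completely manifest.

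One small slip: in your final gluing step you direct inter-class edges ``toward the smaller color,'' but the no-$\ga$-descent condition (colors weakly increase along arcs) forces edges between $V_k$ and $V_\ell$ with $k<\ell$ to point toward $V_\ell$, i.e.\ toward the \emph{larger} color, exactly as in the paragraph following Proposition~\ref{prop:stanley}. This is a typo-level issue and does not affect the argument.
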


\begin{proof}
We claim that 
\begin{equation}\label{eq:dualTH}
\om\left(\prod_{i= 1}^\infty T(z_i\xx)\right)=\prod_{i= 1}^\infty H(z_i\xx).
\end{equation}
Here and in the following we are actually extending $\om$ to the larger space of \emph{symmetric power series} in $\zz$ with coefficients in $K$ (in other words, we allow for symmetric functions of infinite degree), and we can take $K$ to be the field $\QQ(\xx)$ of rational functions in $\xx$ with rational coefficients. 
Observe that for any scalar $t$ in the underlying field $K$,
$$\om\left( \prod_{i=1}^\infty(1+t\, z_i)\right)=\om \left(\sum_{k=0}^\infty e_k t^k\right)=\sum_{k=0}^\infty h_k t^k=\prod_{i=1}^\infty \frac{1}{1-t\,z_i}.$$
Now let $Q(Z)\in K[Z]$ be a polynomial such that $Q(0)=1$. Working in the algebraic closure $\ov K$ of~$K$, one can write $Q(Z)=\prod_{k=1}^d(1+t_kZ)$ with $t_1,\ldots,t_d\in \ov K$. Then, still working over $\ov K$, one gets
$$\om\left( \prod_{i=1}^\infty Q(z_i)\right)=\prod_{k=1}^d\om\left(\prod_{i=1}^\infty(1+t_kz_i)\right)= \prod_{k=1}^d\prod_{i=1}^\infty\frac{1}{1-t_kz_i}=\prod_{i=1}^\infty\frac{1}{Q(-z_i)}.$$
Applying this identity to the polynomial $Q(Z):=T(Z\xx)$ gives~\eqref{eq:dualTH}. Hence,
\begin{eqnarray}
\om(X_G)(\zz)&=&\om\left( [x_1\cdots x_n]\prod_{i= 1}^\infty T(z_i\xx)\right)=[x_1\cdots x_n]\,\om\!\left(\prod_{i= 1}^\infty T(z_i\xx)\right)\nonumber\\
&=&[x_1\cdots x_n]\prod_{i= 1}^\infty H(z_i\xx).\label{eq:dualX}
\end{eqnarray}

Expanding the right-hand side of Equation~\eqref{eq:dualX}, we obtain that $\om(X_G)(\zz)$ is the sum of the monomials  $z_1^{|V_1|}z_2^{|V_2|}\cdots$ over all infinite sequences $((V_1,\ga_1),(V_{2},\ga_{2}),\ldots)$, where $V$ is the disjoint union of the sets $V_i$ and $\gamma_i$ is an acyclic orientation on $G[V_i]$ for all $i\in \PP$. Now Proposition~\ref{prop:stanley-sym} follows using the correspondence detailed after Proposition~\ref{prop:stanley}.
\end{proof}

For an acyclic orientation $\ga$ of $G$ with source-components $S_1,\ldots,S_k$, we denote $\la(\ga)$ the partition of $n$ obtained by ordering the sizes $|S_i|$ in a weakly decreasing manner. 

\begin{prop}\label{prop:greenezaslavsky-sym}
With the above notation,
$$X_G(\zz)=(-1)^n\sum_{\ga \in \mA}(-1)^{\ell(\la(\ga))}p_{\la(\ga)},$$
where the sum is over the set $\mA$ of acyclic orientations of $G$, and $\ell(\la(\ga))$ is the number of source-components of $\ga$.
Equivalently,
\begin{equation}\label{eq:greenezaslavsky-sym}
\om(X_G)(\zz)=\sum_{\ga \in \mA}p_{\la(\ga)}.
\end{equation}
\end{prop}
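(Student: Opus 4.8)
The plan is to prove the second identity~\eqref{eq:greenezaslavsky-sym} first and then deduce the first by applying $\om$. The natural starting point is Equation~\eqref{eq:dualX} from the proof of Proposition~\ref{prop:stanley-sym}, namely $\om(X_G)(\zz)=[x_1\cdots x_n]\prod_{i=1}^\infty H(z_i\xx)$. By Theorem~\ref{thm:heap} we have $H(\xx)=1/T(-\xx)=\exp(P(\xx))$, so $\prod_{i=1}^\infty H(z_i\xx)=\exp\!\big(\sum_{i=1}^\infty P(z_i\xx)\big)$. Since $P(z_i\xx)=\sum_{\he\in\mP}\frac{z_i^{|\he|}}{|\he|}\xx^\he$, summing over $i$ and recognizing the power sums $p_k=\sum_i z_i^k$ gives $\sum_{i=1}^\infty P(z_i\xx)=\sum_{\he\in\mP}\frac{p_{|\he|}}{|\he|}\xx^\he$. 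Denoting this series by $\widetilde P(\xx)$, we obtain $\om(X_G)=[x_1\cdots x_n]\exp(\widetilde P(\xx))$.

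Next I would extract the coefficient of $x_1\cdots x_n$ from $\exp(\widetilde P)=\sum_k \widetilde P^k/k!$. Because we are extracting a squarefree monomial, each contributing term arises from an ordered $k$-tuple of pyramids whose supports form a set composition of $[n]$; dividing by $k!$ turns ordered into unordered partitions, so $\om(X_G)=\sum\prod_j \frac{p_{|\he_j|}}{|\he_j|}$, the sum being over unordered collections of pyramids whose supports partition $[n]$. I would then reorganize this sum by first fixing the underlying set partition $\{V_1,\ldots,V_k\}$ of $[n]$. For a fixed block $V$, the number of pyramids of support $V$ equals $|V|\cdot[\xx^V]P(\xx)$, so the factor $1/|\he|$ cancels the $|V|$ and leaves the weight $[\xx^V]P(\xx)\cdot p_{|V|}$. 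By Lemma~\ref{lem:chrom2}, $[\xx^V]P(\xx)$ is exactly the number of acyclic orientations of $G[V]$ with unique source $\min(V)$, so each block contributes a choice of such an orientation together with the power sum $p_{|V|}$.

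The final step is the bijective heart of the argument: I would invoke the characterization of source-components established in Section~\ref{sec:related} (conditions (i)--(iv) in the discussion of Proposition~\ref{prop:greenezaslavsky}). That characterization shows that an unordered set partition of $[n]$ equipped, on each block $V$, with an acyclic orientation of $G[V]$ having unique source $\min(V)$ is precisely the source-component data of a \emph{unique} acyclic orientation $\ga\in\mA$. Under this bijection $\prod_j p_{|V_j|}=p_{\la(\ga)}$, which yields $\om(X_G)(\zz)=\sum_{\ga\in\mA}p_{\la(\ga)}$, that is~\eqref{eq:greenezaslavsky-sym}. Applying the involution $\om$ to both sides and using $\om(p_k)=(-1)^{k-1}p_k$, hence $\om(p_{\la(\ga)})=(-1)^{n-\ell(\la(\ga))}p_{\la(\ga)}$ (as $|\la(\ga)|=n$), gives the first displayed identity after factoring out $(-1)^n$.

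The main obstacle I anticipate is not the analytic manipulation but making the reorganization in the second and third steps airtight. One must correctly account for the factor $1/|\he|$ in the definition of $P$, which is precisely what converts ``unique source anywhere'' into ``unique source $\min(V)$'', and one must verify that the Greene--Zaslavsky conditions (i)--(iv) genuinely give a bijection rather than merely a surjection or injection. Since conditions (iii)--(iv) reconstruct both the ordering of the blocks and the orientation of the cross-edges uniquely from the unordered data, this should go through cleanly, but it is the step demanding the most care.
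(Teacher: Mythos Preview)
Your proposal is correct and follows essentially the same route as the paper: both start from~\eqref{eq:dualX}, use $H=\exp(P)$ to rewrite $\prod_i H(z_i\xx)=\exp\bigl(\sum_{\he\in\mP}\frac{p_{|\he|}}{|\he|}\xx^\he\bigr)$, extract the squarefree coefficient to land on a sum over set partitions of $[n]$ with a unique-source orientation on each block (via Lemma~\ref{lem:chrom2}), and then invoke the source-component bijection from Section~\ref{sec:related}. The only cosmetic difference is that the paper factors the exponential as $\prod_{\mm}\exp(\cdots\,\xx^\mm)$ and truncates each factor to $1+\cdots$, whereas you expand $\exp(\widetilde P)=\sum_k\widetilde P^k/k!$ and read off ordered tuples directly; these are equivalent bookkeepings of the same computation.
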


\begin{example} 
For the graph $G$ in Figure \ref{fig:example-graph-compo}, the chromatic symmetric function is given in Example~\ref{exp:chrom-sym}, and one can compute
\begin{eqnarray*}
X_G(\zz)&=&p_{1,1,1,1}-4p_{2,1,1}+4p_{3,1}+2p_{2,2}-3p_{4};\\
\om(X_G)(\zz)&=&p_{1,1,1,1}+4p_{2,1,1}+4p_{3,1}+2p_{2,2}+3p_{4}.
\end{eqnarray*}
As stated in Theorem \ref{prop:greenezaslavsky-sym}, the coefficients obtained in this expansions correspond to the fact that the number of acyclic orientations $\ga$ of $G$ with partition $\la(\ga)$ equal to $(1,1,1,1)$ (resp. $(2,1,1)$, $(3,1)$, $(2,2)$, $(4)$) is 1 (resp. 4, 4, 2, 3). This matches the direct count one can do by looking at Figure \ref{fig:example-graph-compo}.
\end{example}

\begin{proof} It suffices to prove~\eqref{eq:greenezaslavsky-sym}, since the other identity follows by applying $\om$.
Recall from Theorem~\ref{thm:heap}, that $\ds H(\xx)=\exp(P(\xx)):=\sum_{k=0}^\infty \frac{P(\xx)^k}{k!}$, where $P$ is the generating function of $G$-pyramids. This gives
\begin{eqnarray*}
\prod_{i= 1}^\infty H(z_i\xx)&=&\exp\!\left(\sum_{i=1}^\infty P(z_i\xx)\right)~=~\exp\!\left(\sum_{\he\in\mP} \frac{p_{\she}}{\she}\xx^\he\right)~=~\exp\!\left(\sum_{\mm\in\NN^n}|\mB_\mm|\frac{p_{|\mm|}}{|\mm|}\xx^\mm\right)
.
\end{eqnarray*}
where for $\mm=(m_1,\ldots,m_n)$ we denote $|\mm|=\sum_im_i$, and we let $\mB_{\mm}$ be the set of $G$-pyramids of type $\mm$. Hence
\begin{eqnarray}\label{eq:HP}
\prod_{i= 1}^\infty H(z_i\xx)&=&\prod_{\mm\in\NN^n}\exp\!\left(|\mB_\mm|\,\frac{p_{|\mm|}}{|\mm|}\xx^\mm\right).
\end{eqnarray}
Thus, by~\eqref{eq:dualX}, 
\begin{eqnarray*}
\om(X_G)(\zz)&=&
[x_1\cdots x_n]\prod_{\mm\in\{0,1\}^n}\exp\!\left(|\mB_\mm|\,\frac{p_{|\mm|}}{|\mm|}\xx^\mm\right)\\
&=& [x_1\cdots x_n]\prod_{\mm\in\{0,1\}^n}\left(1+|\mB_\mm|\,\frac{p_{|\mm|}}{|\mm|}\xx^\mm\right).
\end{eqnarray*}
For $V\subseteq [n]$, let $\mB_V$ be the set of acyclic orientations of $G[V]$ with unique source $\min(V)$ (with the convention $\mB_\emptyset=\emptyset$).
By Lemma~\ref{lem:chrom2}, $|\mB_V|=\frac{|\mB_{\mm}|}{|\mm|}$ if $V\neq \emptyset$, where $\mm\in\{0,1\}^n$ is the tuple encoding the set $V$. Hence
$$\om(X_G)(\zz)=[x_1\cdots x_n] \prod_{V\subseteq [n]}\left(1+|\mB_V|\, p_{|V|}\xx^V\right)=\sum_{\{(V_1,\ga_1),\ldots,(V_i,\ga_i)\}}~\prod_{k=1}^ip_{|V_k|},$$
where the sum is over the set $\mB$ of sets of pairs $\{(V_1,\ga_1),\ldots,(V_i,\ga_i)\}$ such that $V_1,\ldots,V_i$ form a set partition of $[n]$, and for all $k\in [i]$ $\ga_k$ is in $\mB_{V_k}$. 
Reasoning as in Section~\ref{sec:specializations}, we can identify $\mB$ with the set of acyclic orientations and the sets $V_i$ with the corresponding source-components. This proves~\eqref{eq:greenezaslavsky-sym}.
\end{proof}

\begin{remark}
Proposition~\ref{prop:greenezaslavsky-sym} could alternatively be obtained by combining~\cite[Theorem 2.6]{stanley_symmetric} with~\cite[Theorem 7.3]{greenezaslavsky}. Indeed,~\cite[Theorem 2.6]{stanley_symmetric} expresses the coefficient of $p_\la$ in $X_G$ in terms of the M\"obius function of the {\em bond lattice} of $G$, and~\cite[Theorem 7.3]{greenezaslavsky} shows that this M\"obius function has the combinatorial interpretation given in Proposition~\ref{prop:greenezaslavsky-sym}.
\end{remark}

As we now explain, Propositions~\ref{prop:stanley-sym} and~\ref{prop:greenezaslavsky-sym} are refinements of Propositions~\ref{prop:stanley} and~\ref{prop:greenezaslavsky} respectively. Let $q$ be an indeterminate, and let $X_G(\zz)_{|\forall k>0,~p_k=q}$ denote the polynomial in $q$ obtained by substituting each of the generators $p_1,p_2,\ldots$ by $q$. 
We observe that 
\begin{equation}\label{eq:evalq}
X_G(\zz)_{|\forall k>0,~p_k=q}=\chi_G(q)
\end{equation}
and for any non-negative integer $j$,
\begin{equation}\label{eq:eval-dual}
\om(X_G)(\1^j)=(-1)^n\chi_G(-j).
\end{equation}
Indeed the polynomials in~\eqref{eq:evalq} coincide on positive integers by~\eqref{eq:X-chi} (since $p_k(\1^j)=j$), and $\om(X_G)(\1^j)=(-1)^n X_G(\zz)_{|\forall k>0,~p_k=-j}$ (since $\om(p_k)=-(-1)^{k}p_k$ and $X_G$ is homogeneous of degree $n$). Thus, specializing Proposition~\ref{prop:stanley-sym} at $\zz=\1^j$ gives Proposition~\ref{prop:stanley}, and specializing Proposition~\ref{prop:greenezaslavsky-sym} at $(p_1,p_2,\ldots)=(q,q,\ldots)$ gives Proposition~\ref{prop:greenezaslavsky}.\\


We now give a refinement of Theorem~\ref{thm:main}. Consider a second set of variables $\yy=(y_1,y_2,\ldots)$.
For a symmetric function $f=f(\zz)$, we denote $f(\yy +\zz)$ the symmetric function in $\yy$ and $\zz$ obtained by substituting the variable $z_{2i-1}$ by $y_i$ and $z_{2i}$ by $z_i$ for all $i\in \PP$ (equivalently, substituting the generator $p_i=p_i(\zz)$ by $p_i(\yy)+p_i(\zz)$).

\begin{thm}\label{thm:main-symmetric}
Let $G$ be a graph. Let $\mD$ be the set of pairs $(\ga,f)$, where $\ga$ is an acyclic orientation of $G$ and $f:V\to \NN$ is an $\NN$-coloring of $G$ without $\ga$-descent. Then
$$\om(X_G)(\yy+ \zz)=\sum_{(\ga,f)\in \mD}\,p_{\la(\ga_0)}(\yy)\,\prod_{i\in \PP}z_i^{|f^{-1}(i)|}$$
where $\ga_0$ is the restriction of $\ga$ to $G[f^{-1}(0)]$.
\end{thm}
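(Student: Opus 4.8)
The plan is to combine the ``superfication'' substitution $\zz \mapsto \yy + \zz$ with the heap machinery already established, most notably Equation~\eqref{eq:dualX} and the factorization~\eqref{eq:HP} of $\prod_i H(z_i\xx)$ into pyramid contributions. First I would record the starting identity $\om(X_G)(\zz)=[x_1\cdots x_n]\prod_{i=1}^\infty H(z_i\xx)$ from~\eqref{eq:dualX}, and observe that since this identity holds for symmetric power series, it remains valid after the substitution $\zz \mapsto \yy + \zz$. Concretely, the effect of that substitution is to replace the single infinite product $\prod_{i\geq 1}H(z_i\xx)$ by the doubled product $\prod_{i\geq 1}H(y_i\xx)\prod_{i\geq 1}H(z_i\xx)$, because interleaving the $y$- and $z$-variables among the color slots simply doubles the set of colors. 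Thus the target becomes
\begin{equation*}
\om(X_G)(\yy+\zz)=[x_1\cdots x_n]\Bigl(\prod_{i=1}^\infty H(y_i\xx)\Bigr)\Bigl(\prod_{i=1}^\infty H(z_i\xx)\Bigr).
\end{equation*}

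Next I would treat the two products asymmetrically, which is the whole point of the refinement. For the $\yy$-product I would apply exactly the computation in the proof of Proposition~\ref{prop:greenezaslavsky-sym}: rewriting $H=\exp(P)$ and expanding over pyramids turns $\prod_i H(y_i\xx)$ into $\prod_{V}\bigl(1+|\mB_V|\,p_{|V|}(\yy)\,\xx^V\bigr)$, where $\mB_V$ is the set of acyclic orientations of $G[V]$ with unique source $\min(V)$. For the $\zz$-product I would instead use Lemma~\ref{lem:chrom2}\eqref{eq:acyc} directly, keeping $\prod_i H(z_i\xx)$ in the ``colored'' form whose coefficient expansion records, for each color $i$, a vertex set $W_i$ carrying an arbitrary acyclic orientation of $G[W_i]$ and weight $z_i^{|W_i|}$. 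Extracting $[x_1\cdots x_n]$ then forces a disjoint decomposition $[n]=\bigl(\biguplus_k V_k\bigr)\uplus\bigl(\biguplus_i W_i\bigr)$, where the $V_k$ carry single-source acyclic orientations (contributing the power sums $p_{|V_k|}(\yy)$) and the $W_i$ carry arbitrary acyclic orientations (contributing $z_i^{|W_i|}$).

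The final and most delicate step is to repackage this disjoint data as a single pair $(\ga,f)\in\mD$. Here $f^{-1}(0)$ will be $\biguplus_k V_k$, the part built from the $\yy$-product, and $f^{-1}(i)=W_i$ for $i\geq 1$. On $f^{-1}(0)$ the single-source orientations on the $V_k$ assemble, via the source-component correspondence recalled in Section~\ref{sec:specializations}, into a single acyclic orientation $\ga_0$ of $G[f^{-1}(0)]$ whose source-components are exactly the $V_k$; this is precisely what makes the $\yy$-weight equal to $p_{\la(\ga_0)}(\yy)$. The global orientation $\ga$ on all of $G$ is then reconstructed by the same recipe as in Proposition~\ref{prop:stanley-sym}: one glues the restrictions $\ga_0$ and the $W_i$-orientations together by orienting every edge between color classes from the higher color toward the lower color (and toward color $0$), which is exactly the bijection between acyclic-orientation-plus-no-descent-coloring pairs and tuples of color classes each carrying an acyclic suborientation. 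I expect the main obstacle to be checking that this gluing is a genuine bijection in the present mixed setting --- that the ``arbitrary orientation'' data on the $W_i$ together with the ``single-source'' data on the $V_k$ correspond bijectively to pairs $(\ga,f)\in\mD$ with $\ga_0$ prescribed, and that no over- or under-counting occurs at the boundary between color $0$ and the positive colors. This amounts to verifying that the no-$\ga$-descent condition is equivalent to the suborientations on the positive color classes being unconstrained while the color-$0$ class receives the source-component structure, which I would justify by invoking the correspondence after Proposition~\ref{prop:stanley} separately on $f^{-1}(0)$ and on the remaining classes.
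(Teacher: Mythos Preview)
Your proposal is correct and follows essentially the same route as the paper. The paper starts from the identical doubled-product identity, then splits $[n]=U\uplus V$ and recognizes the two factors as $\om(X_{G[U]})(\yy)$ and $\om(X_{G[V]})(\zz)$ (via~\eqref{eq:dualX} applied to the induced subgraphs) before invoking Propositions~\ref{prop:greenezaslavsky-sym} and~\ref{prop:stanley-sym} respectively; you instead unfold those propositions' proofs inline on the $\yy$- and $\zz$-products, which is the same argument at a slightly lower level of abstraction.
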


Observe that Corollary~\ref{cor:equivThm1} (which is equivalent to Theorem~\ref{thm:main}) is the specialization of Theorem~\ref{thm:main-symmetric} obtained by substituting $p_k(\yy)$ by $q$ and $p_k(\zz)$ by $j$ for all $k\in\PP$, and then taking the coefficient of $q^i$. Observe also that setting $\yy=0$ in Theorem~\ref{thm:main-symmetric} gives Proposition~\ref{prop:stanley-sym}, while setting $\zz=0$ gives Proposition~\ref{prop:greenezaslavsky-sym}.

\begin{proof}
By~\eqref{eq:dualX},
\begin{eqnarray*}
\om(X_G)(\yy+\zz)&=&[x_1\cdots x_n]\left(\prod_{i= 1}^\infty H(y_i\xx)\right)\cdot \left(\prod_{i= 1}^\infty H(z_i\xx)\right),\\
&=&\sum_{U\uplus V=[n]}\left([\xx^U]\prod_{i= 1}^\infty H(y_i\xx)\right)\cdot \left([\xx^V]\prod_{i= 1}^\infty H(z_i\xx)\right).
\end{eqnarray*}
where the sum is over the pairs $(U,V)$ of disjoint sets whose union is $[n]$.
Applying  \eqref{eq:dualX} to the induced graphs $G[U]$ and $G[V]$ gives 
\begin{eqnarray*}
\om(X_G)(\yy+\zz)&=&\sum_{U\uplus V=[n]}\om(X_{G[U]})(\yy)\cdot \om(X_{G[V]})(\zz).
\end{eqnarray*}
Lastly, applying Propositions~\ref{prop:greenezaslavsky-sym} and~\ref{prop:stanley-sym} to $\om(X_{G[U]})(\yy)$ and $\om(X_{G[V]})(\zz)$ respectively gives 
\begin{eqnarray*}
\om(X_G)(\yy+\zz)&=&\sum_{U\uplus V=[n]}\left(\sum_{\ga_0\in \mA(U)}p_{\la(\ga_0)}(\yy)\right)\cdot\left(\sum_{(\ga',f')\in\mC(V)}~\prod_{v\in V} z_{f'(v)}\right),
\end{eqnarray*}
where $\mA(U)$ is the set of acyclic orientations of $G[U]$, and $\mC(V)$ is the set of pairs $(\ga',f')$ with $\ga'$ acyclic orientation of $G[V]$ and $f'$ a $\PP$-coloring of $G[V]$ without $\ga'$-descent. Theorem~\ref{thm:main-symmetric} follows by identifying $ \bigcup_{U\uplus V=[n]}\mA(U)\times \mC(V)$ with $\mD$ (identifying $U$ with the set $\ga^{-1}(0)$ of vertices colored 0, etc.).
\end{proof}

As the proof of Theorem~\ref{thm:main-symmetric} shows, it is easy to combine several results into one, at the cost of using several sets of variables. This is because our identities hold at the level of the heap generating function $\prod_{i= 1}^\infty T(z_i\xx)$. For instance, it is straightforward to recover the \emph{superfication} result~\cite[Thm 4.3]{stanley_symmetric}, as we now explain. 

We denote by $X_G(\yy-\zz)$ the function of $\yy$ and $\zz$   obtained from $X_G(\zz)$ by substituting $p_k(\zz)$ by $p_k(\yy)-(-1)^{k}p_k(\zz)$. Equivalently, $X_G(\yy-\zz)$ is  obtained from $X_G(\yy+\zz)$ by applying duality \emph{only on the $\zz$ variables}:
$$X_G(\yy-\zz):=\om_\zz(X_G(\yy+\zz)).$$
Using~\eqref{eq:sym-heap} and~\eqref{eq:dualTH} gives
\begin{eqnarray*}
X_G(\yy-\zz)=[x_1\cdots x_n] \left(\prod_{i= 1}^\infty T(y_i\xx) \right)\cdot \left( \prod_{i= 1}^\infty H(z_i\xx)\right)
=\sum_{U\uplus V=[n]}X_{G[U]}(\yy)\cdot \om(X_{G[V]})(\zz)
\end{eqnarray*}
Hence, 
$$X_G(\yy-\zz)=\sum_{U\uplus V=[n]}\sum_{(f_-,f_+,\ga_+)}y_i^{|f_-^{-1}(i)|}\, z_i^{|f_+^{-1}(i)|},$$
where the inner sum is over the set of triples $(f_-,f_+,\ga^+)$ such that $f_-$ is a proper $\PP$-coloring of $G[U]$, $\ga_+$ is an acyclic orientation of $G[V]$, and $f_+$ is a $\PP$-coloring of $G[V]$ without $\ga_+$-descent. Equivalently (upon coloring $U$ with negative colors, and extending $\ga_+$ to $G$), one gets
\begin{equation}\label{eq:superfication}
X_G(\yy-\zz)=\sum_{(\ga,f)}\,\prod_{i\in\PP} y_i^{|f^{-1}(-i)|}\, z_i^{|f^{-1}(i)|},
\end{equation}
where the sum is over pairs $(\ga,f)$ where $\ga$ is an acyclic orientation of $G$ and $f:V\to \ZZ\setminus\{0\}$ is a coloring without $\ga$-descent such that for all $i<0$ the vertices of color $i$ are pairwise non-adjacent. This is exactly~\cite[Thm 4.3]{stanley_symmetric}.

There is no obstacle to pursuing this idea further. For instance, one can combine~\eqref{eq:superfication} and Theorem~\ref{thm:main-symmetric} into a single statement. Consider a new set of variables $\zz'=(z_1',z_2'\ldots)$, and the function $X_G(\yy-(\zz+\zz'))$ obtained from $X_G(\zz)$ by substituting $p_k(\zz)$ by $p_k(\yy)-(-1)^{k}(p_k(\zz)+p_k(\zz'))$. Let $\mE$ be the set of pairs $(\ga,f)$, where $\ga$ is an acyclic orientation of $G$ and $f:V\to \ZZ$ is an $\ZZ$-coloring of $G$ without $\ga$-descent, such that for all $i<0$ the vertices of color $i$ are pairwise non-adjacent. Then
\begin{eqnarray}\label{eq:combine-sym}
X_G(\yy-(\zz+\zz'))=\sum_{(\ga,f)\in \mE}\,p_{\la(\ga_0)}(\zz')\, \prod_{i\in\PP} y_i^{|f^{-1}(-i)|} \,z_i^{|f^{-1}(i)|} ,
\end{eqnarray}
where $\ga_0$ is the restriction of $\ga$ to $G[f^{-1}(0)]$. 
Note that setting $\yy=0$ in~\eqref{eq:combine-sym} gives Theorem~\ref{thm:main-symmetric}, while setting $\zz'=0$ gives~\eqref{eq:superfication}.

\begin{remark}
Recall the notion of proper multicolorings from Remark~\ref{rem:multicolorings}. For $\mm\in \NN$, the symmetric function 
\begin{equation}
\label{eq:X_m}
X_{G,\mm}(\zz):=[\xx^\mm]\prod_{i= 1}^\infty T(z_i\xx),
\end{equation}
can be interpreted as counting proper multicolorings of $G$ of type $\mm$ according to the number of times each color in $\PP$ is used. By the same reasoning as in Remark~\ref{rem:multicolorings}, one gets
\[X_{G,\mm}(\zz)=\frac{X_{G^\mm}(\zz)}{\mm!}.\]
so that these generalized chromatic symmetric functions are still chromatic symmetric functions, up to a multiplicative constant. Hence the results in this section apply to $X_{G,\mm}$. 
This was noticed already in~\cite[Eq. (3)]{stanley_graphcoloring}. In fact~\cite[Proposition 2.1]{stanley_graphcoloring} follows from the combinatorial interpretation of~\eqref{eq:X_m}.
\end{remark}

\noindent \textbf{Acknowledgment.} We thank the anonymous referees for their numerous careful comments.

\bibliographystyle{plain} 
\bibliography{biblio-chromatic} 

\end{document}